\documentclass[reqno,12pt,a4paper]{amsart}
\usepackage{amscd,amsmath,amssymb,verbatim,color,mathtools,bbm}
\usepackage{nicefrac}
\usepackage{todonotes}
\usepackage{hyperref}
\usepackage{url}
\usepackage{enumerate,enumitem}
\usepackage{appendix}
\usepackage{epsfig,subfigure}
\usepackage{graphicx}

\theoremstyle{plain}
\newtheorem{theorem}{Theorem}[section]
\newtheorem{proposition}[theorem]{Proposition}
\newtheorem{lemma}[theorem]{Lemma}

\newtheorem{assumption}[theorem]{Assumption}

\theoremstyle{definition}

\newtheorem{remark}[theorem]{Remark}

\usepackage{geometry}\geometry{left=1in,right=1in,top=1in,bottom=1in}


\renewcommand{\Re}{\mathop{\rm Re}\nolimits}
\renewcommand{\Im}{\mathop{\rm Im}\nolimits}

\newcommand{\linspan}{\mathop{\rm span}\nolimits}

\newcommand{\esssup}{\mathop{\rm ess\ sup}}

\newcommand{\supp}{\mathop{\rm supp}\nolimits}
\newcommand{\diver}{\mathop{\rm div}\nolimits}
\newcommand{\curl}{\mathop{\rm curl}\nolimits}

\newcommand{\sign}{\mathop{\rm sign}\nolimits}

\newcommand{\rest}{\left.\kern-2\nulldelimiterspace\right|_}
\newcommand{\norm}[2]{|#1|_{#2}}

\newcommand{\ex}{\mathrm{e}}
\newcommand{\p}{\partial}

\newcommand{\ed}{\mathrm d}
\newcommand{\im}{\mathbf i}

\newcommand{\N}{{\mathbb N}}
\newcommand{\Z}{{\mathbb Z}}
\newcommand{\R}{{\mathbb R}}
\newcommand{\Sp}{{\mathbb S}}
\newcommand{\T}{{\mathbb T}}

\newcommand{\D}{{\mathrm D}}

\newcommand{\BBB}{{\mathbf B}}
\newcommand{\CCC}{{\mathbf C}}

\newcommand{\nnn}{\mathbf n}

\newcommand{\CC}{{\mathcal C}}

\newcommand{\FF}{{\mathcal F}}
\newcommand{\GG}{{\mathcal G}}
\newcommand{\HH}{{\mathcal H}}

\newcommand{\sS}{{\mathcal S}}

\newcommand{\WW}{{\mathcal W}}

\begin{document}

\title{Gevrey regularity for Navier--Stokes equations under Lions boundary conditions}
\author{Duy Phan}\author{S\'ergio S.~Rodrigues}
\address{Johann Radon Institute for Computational and Applied Mathematics,
\"OAW,\newline\indent Altenbergerstra{\normalfont\ss}e 69, A-4040 Linz.\newline \indent
e-mails: {\tt duy.phan-duc@oeaw.ac.at,sergio.rodrigues@oeaw.ac.at}}
\thanks{The authors acknowledge support from the Austrian Science Fund~(FWF): P~26034-N25.}

\begin{abstract}
The Navier--Stokes system is considered in a compact Riemannian manifold. Gevrey class regularity is proven under Lions boundary conditions: in 2D for the 
Rectangle, Cylinder, and  Hemisphere, and in 3D for the Rectangle. The cases of the 2D Sphere and 2D and 3D Torus are also revisited.\\
\smallskip
\noindent {MSC2010:} 35Q30, 76D03.\\
\smallskip
\noindent {Keywords:} Navier--Stokes equations, Gevrey class regularity.
\end{abstract}

\maketitle

{\tiny
\tableofcontents
}

\pagestyle{myheadings} \thispagestyle{plain} \markboth{\sc D.Phan and S. S.
Rodrigues}{\sc Gevrey class regularity for Navier--Stokes equations}

\section{Introduction}
Let $\Omega\subset\R^d$, $d\in\{2,\,3\}$ be a connected bounded domain located locally on
one side of its smooth boundary $\Gamma=\p\Omega$. The
Navier--Stokes system, in~$(0,\,T)\times\Omega$, reads
\begin{equation}\label{sys-u-flat}
 \p_t u+\langle u\cdot\nabla\rangle u-\nu\Delta u+\nabla p+h=0, \quad \diver u
 =0,\quad \GG u\rest \Gamma =0,\quad u(0,\,x)=u_0(x)
\end{equation}
where as usual $u=(u_1,\,\dots,\,u_d)$ and~$p$,
defined for $( t,\,x_1,\,\dots,\,x_d)\in I\times\Omega$, are respectively the unknown velocity field and
pressure of the fluid, $\nu>0$ is the viscosity, the operators~$\nabla$ and~$\Delta$ are respectively the
well known gradient and Laplacian in the space variables $(x_1,\,\dots,\,x_d)$,
$\langle u\cdot\nabla\rangle v$ stands for $(u\cdot\nabla v_1,\,\dots,\,u\cdot\nabla v_d)$,
$\diver u\coloneqq \sum_{i=1}^d \p_{x_i}u_i$ and $h$ is a fixed function. Further, $\GG$ is an appropriate linear operator
imposing the boundary conditions.

In the case $\Omega$ is a compact Riemannian manifold, either with or without boundary, the Navier--Stokes equation reads
\begin{equation}\label{sys-u-riem}
 \p_t u+\nabla_u^1u+\nu\Delta_\Omega u+\nabla_\Omega p+h=0, \quad \diver u
 =0,\quad \GG u\rest \Gamma =0,\quad u(0,\,x)=u_0(x).
\end{equation}
That is we just replace the Laplace operator by the Laplace-de~Rham operator,
the gradient operator by the Riemannian gradient operator, and the nonlinear term by the Levy-Civita connection. Recall
that a flat (Euclidean) domain $\Omega\subset\R^d$ can be seen a Riemannian manifold and we have $-\Delta=\Delta_\Omega$,
$\nabla=\nabla_\Omega$ and
$\langle u\cdot\nabla\rangle v=\nabla_u^1u$ (see, e.g., \cite[Chapter~5]{Rod-Thesis08}). That is,~\eqref{sys-u-riem} reads~\eqref{sys-u-flat}
in the Euclidean case. We should say that some authors consider the Navier--Stokes equation
on a Riemannian manifold with a slightly different Laplacian operator and sometimes with on more
term involving the (Ricci) curvature of the Riemannian manifold. In that case, we also recover~\eqref{sys-u-flat}
in the Euclidean case because
the curvature vanishes. Writing the Navier--Stokes as~\eqref{sys-u-riem}, we are
following~\cite{Ilyin91,Ilyin94,CaoRammahaTiti99,FenglerFreeden05,Rod-Thesis08,Rod-wmctf07}; for other writings
we refer to~\cite{Priebe94,CoronFur96}.

Often system~\eqref{sys-u-riem}
can be rewritten as an evolutionary system
\begin{equation}\label{sys-u-evol}
 \dot u+B(u,\,u)+A u+h=0,\quad u(0,\,x)=u_0(x),
\end{equation}
where $\Pi$ is a suitable projection onto a subspace~$H$ of divergence free vector
fields~(cf.~\cite[Chapter~II, Section~3]{FoiManRosaTem01},
\cite[Section~4]{Rod06}, \cite[Section~5.5]{Rod-Thesis08}); formally
$B(u,\,v)\coloneqq\Pi\nabla_u^1v$ and $Au=\nu\Pi\Delta_\Omega u$. Usually~$\Pi\nabla=0$, and we suppose that $h=\Pi h$
(otherwise we have just to take~$\Pi h$ in~\eqref{sys-u-evol} instead). 

The aim of this work is to give some sufficient conditions to guarantee that the solution of system~\eqref{sys-u-riem} lives
in a Gevrey regularity space.

For the case of periodic boundary conditions, that is, for the case $\Omega=\T^d$, the
Gevrey regularity has been proven in the pioneering work~\cite{FoiasTemam89} for the Gevrey
class~$\D(A^\frac{1}{2}\ex^{\psi(t)A^{\frac{1}{2}}})$, provided~$u_0\in\D(A^{\frac{1}{2}})$. These results have been extended
to other Gevrey classes in~\cite{Liu92}, namely~$\D(A^{s}\ex^{\psi(t)A^{\frac{1}{2}}})$, provided~$u_0\in\D(A^{s})$,
with $s>\frac{d}{4}$.
The first observation is that there is a gap, for the value of~$s$, for $d=3$. As far as we know this gap is still open until now.
Here we fill the gap, that is, for $\Omega=\T^3$ the Gevrey regularity holds in~$\D(A^{s}\ex^{\psi(t)A^{\frac{1}{2}}})$,
provided~$u_0\in\D(A^{s})$, with
$s\ge\frac{1}{2}$. Further for $\Omega=\T^2$, it will follow that the Gevrey regularity holds
in~$\D(A^{s}\ex^{\psi(t)A^{\frac{1}{2}}})$, provided~$u_0\in\D(A^{s})$, with
$s>0$.

In the case of the Navier--Stokes in the {2D} Sphere~$\Sp^2$, from our conditions, we can
recover the results obtained in~\cite{CaoRammahaTiti99}, that is to say that
the Gevrey regularity holds in~$\D(A^{s}\ex^{\psi(t)A^{\frac{1}{2}}})$, provided~$u_0\in\D(A^{s})$, with
$s>\frac{1}{2}$.

In the above mentioned cases the manifolds $\T^d$ and $\Sp^2$ are boundaryless, which means that essentially we have no boundary conditions.
Here we consider the case of manifolds with boundary and three new results are obtained under Lions boundary conditions,
namely, in the cases $\Omega$ is either a 2D Rectangle~$(0,\,a)\times(0,\,b)$ or a 2D
Cylinder~$(0,\,a)\times b\Sp^1$, or a 2D Hemisphere~$\Sp^2_+$.
By Lions boundary conditions, in two dimensions, we mean
the vanishing both of the normal component~$u\cdot\nnn$ and of the vorticity~$\nabla^\bot\cdot u$ of the vector field $u$ at the boundary;
the reason of the terminology (also adopted in~\cite{Kelliher06,Rod06}) is the
work done in~\cite[Section~6.9]{Lions69}. However the terminology is not followed by all authors,
for example, in~\cite[Section~3]{IlyinTiti06} they are just called ``stress-free boundary conditions''.
Notice that Lions boundary conditions can be seen as a particular case of (generalized) Navier boundary conditions
(cf.~\cite[Section~1 and Corollary~4.2]{Kelliher06}, cf.~\cite[system~(4.1)-(4.2) and Remark~4.4.1]{Rod-Thesis08}).
The Navier boundary conditions are also defined in three dimensions, and
the particular case considered in~\cite[Equation~(1.4)]{XiaoXin07} would correspond to the three dimensional Lions boundary conditions.
The study of Navier boundary conditions have been addressed by many authors in the last years, either because in some situations they
may be more realistic than no-slip boundary conditions or because they are more appropriate in finding a solution for the Euler system as
a limit of solutions for the Navier--Stoles system as~$\nu$ goes to zero (cf.~\cite{XiaoXin07,WangXinZang12}, \cite[Section~8]{Kelliher06}),
or even the possibility to recover the solution under no-slip boundary conditions
as a limit of solutions under Navier boundary conditions (cf.~\cite{JagerMikelic01}, and conversely (cf.~\cite[Section~9]{Kelliher06}). We
refer also to~\cite{IftimiePlanas06,FeireislNecasova11,ChemetovCiprianoGavrilyuk10,AmroucheSeloula11} and references therein.

In both cases of the Rectangle or Cylinder, we obtain the Gevrey regularity
in~$\D(A^{s}\ex^{\psi(t)A^{\frac{1}{2}}})$, provided~$u_0\in\D(A^{s})$, with
$s>0$. In the case of the Hemisphere we find that the Gevrey regularity
holds in~$\D(A^{s}\ex^{\psi(t)A^{\frac{1}{2}}})$, provided~$u_0\in\D(A^{s})$, with
$s>\frac{1}{2}$.

\medskip
The rest of the paper is organized as follows. In Section~\ref{S:prelim}, we give
the necessary conditions (as assumptions) for the existence
of solutions living in a Gevrey class regularity space. In Section~\ref{S:gevrey},
the Gevrey class regularity is proven under the conditions on
the sequence of nonrepeated eigenvalues of the Stokes operator.
In Section~\ref{S:repeated}, we give the corresponding conditions on the sequence of repeated eigenvalues.
In Section~\ref{S:examples}, we revisit
the cases where $\Omega$ is the Torus~$\T^d$ and the Sphere~$\Sp^2$
and give some new examples, namely the cases of 2D Hemisphere, 2D Rectangle, and 2D Cylinder under Lions boundary conditions. 
Finally, the Appendix gathers some auxiliary results used in the main text.

\medskip\noindent
{\bf Notation.}
\addcontentsline{toc}{subsection}{\qquad\; Notation}
We write~$\R$ and~$\mathbb N$ for the sets of real numbers and nonnegative
integers, respectively, and we define $\R_0\coloneqq(0,\,+\infty)$, and $\mathbb
N_0\coloneqq\mathbb N\setminus\{0\}$. We denote by $\Omega\subset\R$ as a
bounded interval.


Given a Banach space~$X$ and an open subset $O\subset\R^n$, let us denote by~$L^p(O,\,X)$, with either $p\in[1,\,+\infty)$ or $p=\infty$,
the Bochner space
of measurable functions $f:O\to X$, and such that
$|f|_X^p$ is integrable over~$O$, for $p\in[1,\,+\infty)$, and such that $\esssup_{x\in O}|f(x)|_X<+\infty$, for $p=\infty$.
In the case $X=\R$ we recover the usual Lebesgue spaces.
By~$W^{s,p}(O,\,\R)$, for
$s\in\R$, denote the usual Sobolev space of order~$s$. In the case
$p=2$, as usual, we denote $H^s(O,\,\R)\coloneqq W^{s,2}(O,\,\R)$. Recall that
$H^0(O,\,\R)=L^2(O,\,\R)$. For each $s>0$, we recall also that~$H^{-s}(O,\,\R)$ stands for the dual space of
$H_0^s(O,\,\R)=\mbox{ closure of }\{f\in C^\infty(O,\,\R)\mid
\supp f\subset O\}$ in~$H^s(O,\,\R)$. Notice that
$H^{-s}(O,\,\R)$ is a space of distributions.

For a normed space~$X$, we denote by~$|\cdot|_X$ the corresponding
norm; in the particular case $X=\R$ we denote $|\cdot|\coloneqq|\cdot|_\R$. By~$X'$ we denote the dual of~$X$,
and by~$\langle \cdot,\cdot\rangle_{X',X}$ the duality between~$X'$
and~$X$. The dual space is endowed with the usual dual norm:
$|f|_{X'}\coloneqq\sup\{\langle f,x\rangle_{X',X}\mid x\in
X\mbox{ and }|x|_X=1\}$. In the case that~$X$ is a Hilbert space we denote the inner product by
$(\cdot,\cdot)_X$.

%


Given a Riemannian manifold $\Omega=(\Omega,\,g)$ with Riemannian metric tensor $g$, we denote by $T\Omega$ the tangent
bundle of $\Omega$ and by $\ed\Omega$ the volume element of~$\Omega$. We denote by~$H^s(\Omega,\,\R)$ and~$H^s(\Omega,\,T\Omega)$
respectively the Sobolev spaces of functions and vector fields defined in $\Omega$. Recall that if $\Omega=O\subset\R^n$, then
$H^s(O,\,TO)=H^s(O,\,\R^n)\sim (H^s(O,\,\R))^n$.

%

$C,\,C_i$, $i=1,2,\dots$, stand for unessential positive constants.

\section{Preliminaries}\label{S:prelim}
\subsection{The evolutionary Navier--Stokes system}\label{sS:evol_sys}
Given a $d$-dimensional compact Riemannian manifold $\Omega=(\Omega,\,g)$, $d\in\{2,\,3\}$, we (suppose we can)
write the Navier--Stokes system as
an evolutionary system in a suitable
closed subspace~$H\subseteq\{u\in L^2(\Omega,\,T\Omega)\mid\diver u=0\}$ of divergence free vector fields
\begin{equation}\label{sys_u}
 \dot u+\nu A u+B(u)+h=0,\quad u(0)=u_0,
\end{equation}
where $A\coloneqq\Pi\Delta_\Omega$ is the Stokes operator and $B(u)\coloneqq B(u,\,u)$ with
$B(u,\,v)\coloneqq\Pi\nabla_u^1v$ as a bilinear operator.

Here $\Pi$ stands for the orthogonal
projection in $L^2(\Omega,\,T\Omega)$ onto $H$,  $\Delta_\Omega$ stands for the
Laplace--de~Rham operator and $(u,\,v)\mapsto\nabla_{u}^1v$ stands for the Levi--Civita connection (cf.\cite[Chapter~3, Section~3.3]{Jost05}).

Recall that, for a domain $\Omega\in\R^d$, we can identify $T\Omega$ with $\R^d$, 
$\Delta_\Omega=-\Delta$ coincides with the usual Laplacian up to the minus sign, and
$\nabla_u^1v=\langle u\cdot\nabla\rangle v$ (see~\cite[Chapter~5, Sections~5.1 and~5.2]{Rod-Thesis08}, \cite[Section~1]{Ilyin91})

We consider $H$, endowed with the norm inherited from $L^2(\Omega,\,T\Omega)$, as a pivot space,
that is, $H=H'$. Let $V\subseteq H$ be another Hilbert space, such that $A$ maps $V$
onto $V'$. The domain of $A$, in $H$,
is denoted $\D(A)\coloneqq\{u\in H\mid Au\in H\}$.

The spaces $H$, $V$, and $\D(A)$ will depend on the boundary conditions where the fluid will be subjected to.
We assume that the inclusion $V\subseteq H$ is dense, continuous, and compact. In this case, the eigenvalues of $A$,
repeated accordingly with their multiplicity, form an increasing sequence $(\underline\lambda_k)_{k\in\N_0}$,
\[
 0<\underline\lambda_1\le\underline\lambda_2\le\underline\lambda_3\le\underline\lambda_4\le\dots,
\]
with $\underline\lambda_k$ going to $+\infty$ with $k$. 

Consider also the strictly increasing subsequence $(\lambda_k)_k\in\N_0$ of the distinct (i.e. nonrepeated) eigenvalues
\[
 0<\lambda_1<\lambda_2<\lambda_3<\dots;
\]
and denote by $P_k$ the orthogonal projection in $H$ onto the eigenspace $P_kH=\{z  \in H\mid A z =\lambda_k z \}$,
associated with the eigenvalue $\lambda_k$,
\begin{equation}\label{projPk}
P_k\colon H\to P_kH,\quad v\mapsto P_kv; 
\end{equation}
with $v=P_kv+w$ and $(w,\,z)_H=0$ for all $z\in P_kH$.

We define also the trilinear form
\[
b(u,\,v,\,w)\coloneqq \int_\Omega g(\nabla^1_u v,\,w)\,\ed\Omega,
\]
provided the integral is finite,
where $g(\cdot,\,\cdot)$ stands for the scalar product in $T\Omega$ induced by the metric tensor~$g$.

Throughout the paper, we consider the following assumptions:

\begin{assumption}\label{A:A}\ \\
$\bullet$ $V\subset H^1(\Omega,\,T\Omega)$, and
$|u|_V\coloneqq \left(\langle Au,\,u\rangle_{V',\,V}\right)^{\frac{1}{2}}$ defines a norm
equivalent to the one inherited from~$H^1(\Omega,\,T\Omega)$;\\
$\bullet$ $\D(A)\subset H^2(\Omega,\,T\Omega)$, and $|u|_{\D(A)}\coloneqq |Au|_H$ defines a norm
equivalent to the one inherited from~$H^2(\Omega,\,T\Omega)$.
\end{assumption}

\begin{assumption}\label{A:B}
The following properties hold for the trilinear form.\\
$\bullet$ $b(u,\,u,\,v)=0$ if $u\in P_kH$ for some $k\in\N_0$;\\
$\bullet$ $b(u,\,v,\,w)=-b(u,\,w,\,v)$;\\
$\bullet$ $|b(u,\,v,\,w)|\le C|u|_{L^\infty(\Omega,\,T\Omega)}|v|_{H^1(\Omega,\,T\Omega)}|w|_{L^2(\Omega,\,T\Omega)}$;\\
$\bullet$ $|b(u,\,v,\,w)|\le C|u|_{L^2(\Omega,\,T\Omega)}|v|_{H^1(\Omega,\,T\Omega)}|w|_{L^\infty(\Omega,\,T\Omega)}$;\\
$\bullet$ $|b(u,\,v,\,w)|\le C|u|_{L^4(\Omega,\,T\Omega)}|v|_{H^1(\Omega,\,T\Omega)}|w|_{L^4(\Omega,\,T\Omega)}$.
\end{assumption}

\begin{assumption}\label{A:Bnml}
There are real numbers $\beta\ge0$ and $\alpha\in(0,\,1)$ such that, for all triples $(n,\,m,\,l)\in\N_0^3$,
\[
\left\{\begin{array}{l}
        (u,\,v,\,w)\in P_nH\times P_mH\times P_lH,\\
        (B(u+v),\,w)_H\ne 0,
       \end{array}\right.
\quad\mbox{implies}\quad
\lambda_l^\alpha\le \lambda_n^\alpha+\lambda_m^\alpha+\beta.
\]
\end{assumption}

Next, for given $(n,\,m,\,l)\in\N_0^3$, we define the sets
\begin{align*}
 \FF_{n,\,m}^\bullet&\coloneqq\left\{k\in\N_0\,\left|\;\begin{array}{l}(B(u+v),\,w)_H\ne 0,\\ \mbox{ for some }
       (u,\,v,\,w)\in P_nH\times P_mH\times P_kH
       \mbox{ with } n<m 
       \end{array}\right.\right\};\\
 \FF_{n,\,\bullet}^l&\coloneqq\left\{k\in\N_0\,\left|\;\begin{array}{l}(B(u+v),\,w)_H\ne 0,\\ \mbox{ for some }
      (u,\,v,\,w)\in P_nH\times P_kH\times P_lH
       \mbox{ with } n<k 
       \end{array}\right.\right\}.
\end{align*}

\begin{assumption}\label{A:Fnml}
There are $C_\FF\in\N_0$ and $ \zeta \in[0,\,+\infty)$ such that, for all $n\in\N_0$
\[
\sup_{(m,\,l)\in\N_0^2}\left\{ {\rm card}(\FF_{n,\,m}^\bullet),\,{\rm card}(\FF_{n,\,\bullet}^l) \right\}\le C_\FF\lambda_n^{ \zeta },
\]
where ${\rm card}(S)$ stands for the cardinality (i.e., the number of elements) of the set $S$.
\end{assumption}

\begin{remark}
Assumptions~\ref{A:A} and~\ref{A:B} are satisfied in well known settings. In contrast, assumptions~\ref{A:Bnml} and~\ref{A:Fnml} will be satisfied more
seldom and play a
key role to derive the Gevrey class regularity for the solutions of the Navier--Stokes system~\eqref{sys_u}.
\end{remark}

\subsection{Some auxiliary results} We present now some results that will be useful hereafter.

\begin{proposition} \label{P:ab_s}
 For given nonnegative real numbers $a$, $b$, and $s$, with $a+b>0$ and $s>0$, it holds
\[
\begin{array}{ll}
2^{s-1}(a^s+b^s)\leq(a+b)^s\leq a^s+b^s,&\mbox{ for }0< s\leq1;\\
a^s+b^s\leq(a+b)^s\leq 2^{s-1}(a^s+b^s),&\mbox{ for }s\geq1.
\end{array}
\]
\end{proposition}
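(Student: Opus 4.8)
The plan is to reduce everything to two elementary one–variable facts by exploiting that each side is positively homogeneous of degree~$s$: scaling $(a,b)\mapsto(\tau a,\tau b)$ with $\tau>0$ multiplies every one of $a^s$, $b^s$, $(a+b)^s$, $2^{s-1}(a^s+b^s)$ by $\tau^s$. Since $a+b>0$ we may therefore assume $a+b=1$ (the remaining cases $a=0$ or $b=0$ give equalities throughout and need no argument), and write $a=x\in[0,1]$, $b=1-x$. The four claimed inequalities then become the single statement
\[
1\le x^s+(1-x)^s\le 2^{1-s}\quad(0<s\le1),\qquad 2^{1-s}\le x^s+(1-x)^s\le 1\quad(s\ge1),
\]
valid for all $x\in[0,1]$.

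For the bounds involving $1$ I would use that for $t\in[0,1]$ one has $t^s\ge t$ when $0<s\le1$ and $t^s\le t$ when $s\ge1$ (immediate, since for $t\in(0,1]$ one has $t^s/t=t^{s-1}$, and $t=0$ is trivial). Applying this with $t=x$ and with $t=1-x$ and adding yields $x^s+(1-x)^s\ge x+(1-x)=1$ in the first regime and $x^s+(1-x)^s\le1$ in the second; unwinding the normalization gives $(a+b)^s\le a^s+b^s$ for $0<s\le1$ and $a^s+b^s\le(a+b)^s$ for $s\ge1$.

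For the bounds involving $2^{s-1}$ I would invoke convexity: the map $t\mapsto t^s$ is concave on $[0,+\infty)$ for $0<s\le1$ and convex there for $s\ge1$ (e.g.\ from $(t^s)''=s(s-1)t^{s-2}$ on $(0,+\infty)$, together with continuity at $0$). Its midpoint inequality at the points $a$ and $b$ reads $\bigl(\tfrac{a+b}{2}\bigr)^s\ge\tfrac{1}{2}(a^s+b^s)$ in the concave case and $\bigl(\tfrac{a+b}{2}\bigr)^s\le\tfrac{1}{2}(a^s+b^s)$ in the convex case; multiplying by $2^s$ gives $2^{s-1}(a^s+b^s)\le(a+b)^s$ for $0<s\le1$ and $(a+b)^s\le2^{s-1}(a^s+b^s)$ for $s\ge1$. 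Together with the previous paragraph this is the full statement.

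There is no serious obstacle here; the proof is a short exercise. The only points requiring a little care are keeping track of the fact that every inequality reverses as $s$ crosses the value~$1$ (and collapses to an equality at $s=1$, where $2^{1-s}=1$ and both extremal expressions equal $a+b$), and recording the trivial degenerate cases $a=0$ or $b=0$. As a sanity check one may picture the normalized form: $g(x)\coloneqq x^s+(1-x)^s$ is symmetric about $x=\tfrac{1}{2}$, concave (resp.\ convex) on $[0,1]$, with $g(0)=g(1)=1$ and $g(\tfrac{1}{2})=2^{1-s}$, so $g$ runs monotonically on $[0,\tfrac{1}{2}]$ between the two bounds $1$ and $2^{1-s}$; equality in the ``$1$''-bounds holds exactly when $a=0$ or $b=0$, and in the ``$2^{s-1}$''-bounds exactly when $a=b$.
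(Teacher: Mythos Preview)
Your proof is correct and complete. It takes a genuinely different route from the paper's own argument. The paper studies the ratio $f(x,y)=\dfrac{(x+y)^s}{x^s+y^s}$ directly as a two-variable function, computes its gradient, observes that $\nabla f$ is everywhere orthogonal to the radial direction $(x,y)$ so that $f$ is constant along rays (the homogeneity you invoke), and then reads off the extrema on each circular arc from the sign of the factor $(y^{s-1}-x^{s-1})$: the minimum and maximum along any arc in the first quadrant sit on the lines $x=0$, $y=0$ (where $f=1$) and $x=y$ (where $f=2^{s-1}$), with the roles swapping as $s$ crosses~$1$. Your approach instead normalizes by homogeneity to a one-variable problem on $[0,1]$ and then dispatches the two pairs of bounds by the elementary monotonicity $t^s\gtrless t$ and by the midpoint (Jensen) inequality for the concave/convex map $t\mapsto t^s$. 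Your argument is shorter and avoids any calculus beyond the sign of a second derivative; the paper's gradient analysis is a bit heavier but makes the geometric picture (level sets of $f$ are rays, extrema on arcs lie on the three distinguished lines) explicit in one stroke and yields the equality cases as a by-product.
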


The proof is given in the Appendix, Section~\ref{sApx:proofP:ab_s}.

\begin{remark}
The constants in Proposition~\ref{P:ab_s} are sharp, in the sense that
\begin{itemize}
 \item for $a=b$, we have
$2^{s-1}(a^s+b^s)=(a+b)^s$ for $s>0$,
\item for either $a=0$ or $b=0$, we have
$(a+b)^s= a^s+b^s$ for $s>0$.
\end{itemize}
\end{remark}

\begin{lemma} \label{L:lambda_s}
Assumption~\ref{A:Bnml} holds only if for all $s>0$ there exists a nonnegative real number $C_{(s,\,\alpha,\,\beta)}>0$
depending only on $(s,\,\alpha,\,\beta,\,\lambda_1)$
such that 
\[
\left\{\begin{array}{l}
        (u,\,v,\,w)\in P_nH\times P_mH\times P_lH,\\
        (B(u+v),\,w)_H\ne 0,
       \end{array}\right.
\quad\mbox{implies}\quad
\lambda_l^s\le C_{(s,\,\alpha,\,\beta,\,\lambda_1)}(\lambda_n^s+\lambda_m^s).
\]
\end{lemma}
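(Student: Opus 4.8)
The plan is to deduce the general power $s>0$ from the single inequality with exponent $\alpha$ provided by Assumption~\ref{A:Bnml}, by an elementary monotonicity-plus-convexity argument on the exponents. So suppose $(u,v,w)\in P_nH\times P_mH\times P_lH$ with $(B(u+v),w)_H\neq 0$. By Assumption~\ref{A:Bnml} we have
\[
\lambda_l^\alpha\le\lambda_n^\alpha+\lambda_m^\alpha+\beta.
\]
Since all eigenvalues are bounded below by $\lambda_1>0$, we can absorb the additive constant $\beta$: write $\beta=\frac{\beta}{\lambda_1^\alpha}\lambda_1^\alpha\le\frac{\beta}{\lambda_1^\alpha}(\lambda_n^\alpha+\lambda_m^\alpha)\cdot\frac12$, or more crudely $\beta\le\frac{\beta}{\lambda_1^\alpha}\lambda_n^\alpha\le\frac{\beta}{\lambda_1^\alpha}(\lambda_n^\alpha+\lambda_m^\alpha)$, so that $\lambda_l^\alpha\le(1+\beta\lambda_1^{-\alpha})(\lambda_n^\alpha+\lambda_m^\alpha)=:C_1(\lambda_n^\alpha+\lambda_m^\alpha)$, with $C_1$ depending only on $(\alpha,\beta,\lambda_1)$.

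Next I would pass from exponent $\alpha$ to exponent $s$. Set $\theta:=s/\alpha>0$. Raising the previous inequality to the power $\theta$ gives
\[
\lambda_l^s=\bigl(\lambda_l^\alpha\bigr)^\theta\le C_1^\theta\bigl(\lambda_n^\alpha+\lambda_m^\alpha\bigr)^\theta.
\]
Now apply Proposition~\ref{P:ab_s} with $a=\lambda_n^\alpha$, $b=\lambda_m^\alpha$, exponent $\theta$: if $\theta\le 1$ then $(\lambda_n^\alpha+\lambda_m^\alpha)^\theta\le\lambda_n^{\alpha\theta}+\lambda_m^{\alpha\theta}=\lambda_n^s+\lambda_m^s$; if $\theta\ge 1$ then $(\lambda_n^\alpha+\lambda_m^\alpha)^\theta\le 2^{\theta-1}(\lambda_n^{s}+\lambda_m^{s})$. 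In either case $(\lambda_n^\alpha+\lambda_m^\alpha)^\theta\le 2^{\max\{\theta-1,0\}}(\lambda_n^s+\lambda_m^s)$, and therefore
\[
\lambda_l^s\le C_1^{s/\alpha}\,2^{\max\{s/\alpha-1,0\}}\,(\lambda_n^s+\lambda_m^s)=:C_{(s,\alpha,\beta,\lambda_1)}(\lambda_n^s+\lambda_m^s),
\]
which is the claimed inequality, with a constant depending only on $(s,\alpha,\beta,\lambda_1)$.

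There is no serious obstacle here; the only point needing a little care is bookkeeping the dependence of the constant and handling the two regimes $\theta\le1$ and $\theta\ge1$ separately when invoking Proposition~\ref{P:ab_s}, which is exactly why that proposition was recorded beforehand. One should also note explicitly that the hypothesis ``$(B(u+v),w)_H\neq0$'' is what licenses the use of Assumption~\ref{A:Bnml}, and that the statement is an ``only if'', i.e.\ we are deriving a consequence of Assumption~\ref{A:Bnml}, not an equivalence — though in fact taking $s=\alpha$ shows the converse direction trivially as well, so the two are equivalent; I would mention this in passing.
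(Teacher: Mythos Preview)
Your proof is correct and follows essentially the same route as the paper: absorb the additive constant $\beta$ using $\lambda_n^\alpha+\lambda_m^\alpha\ge 2\lambda_1^\alpha$ to obtain $\lambda_l^\alpha\le\bigl(1+\tfrac{\beta}{2\lambda_1^\alpha}\bigr)(\lambda_n^\alpha+\lambda_m^\alpha)$, then raise to the power $s/\alpha$ and invoke Proposition~\ref{P:ab_s}. Your additional remark on the converse and your explicit handling of the two regimes $\theta\lessgtr 1$ are fine embellishments but not needed.
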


\begin{proof}
From Assumption~\ref{A:Bnml}, since $\left( \lambda_k \right)_{k \in \N_0}$ is an increasing sequence, we have that 
\[
\lambda_l^\alpha \le \lambda_n^\alpha + \lambda_m^\alpha + \beta\frac{\lambda_n^\alpha + \lambda_m^\alpha}{2\lambda_1^\alpha}
= \left( 1+ \frac{\beta}{2 \lambda_1^\alpha} \right)  \left( \lambda_n^\alpha + \lambda_m^\alpha \right). 
\]
Now for any $s>0$, it follows that
\[
\lambda_l^s \le \left( 1+ \frac{\beta}{2 \lambda_1^\alpha} \right)^{\frac{s}{\alpha}} D_{\frac{s}{\alpha}}
\left( \lambda_n^s + \lambda_m^s \right)
\]
where the constant $D_{\frac{s}{\alpha}}$ depending only on $\frac{s}{\alpha}$ is given by Proposition~\ref{P:ab_s}.
\end{proof}

\section{Gevrey class regularity}\label{S:gevrey}
Here we show that, under Assumptions~\ref{A:Bnml} and~\ref{A:Fnml} and for suitable data $(u_0,\,h)$,
the solution $u$ of system~\eqref{sys_u}\label{sS:gevrey}
takes its values $u(t)$ in a Gevrey class regularity space. We follow the arguments in~\cite{FoiasTemam89, Liu92,CaoRammahaTiti99}.
\subsection{Gevrey spaces and main theorem}
Let us set a complete orthonormal system $\{W_k\mid k\in\N_0\}$ of
eigenfunctions of the Stokes operator $A$. That is,
\[
 AW_k=\underline\lambda_kW_k,\mbox{ for all }k\in\N_0.
\]
We recall that any given $u\in H$ can be written in a unique way as $u=\sum_{k\in\N_0}u_kW_k$, with $u_k=(u,\,W_k)_H\in\R$.
Now, given $s\ge0$ we may define the power $A^s$ of the Stokes operator as
\[
A^s u\coloneqq\sum_{k\in\N_0}\underline\lambda_k^s u_kW_k,
\]
and we denote its domain by $\D\left( A^s\right)\coloneqq\{u\in H\mid A^su\in H\}$.

Analogously we may define the negative powers $A^{-s}$ as
\[
A^{-s} u\coloneqq\sum_{n\in\N_0}\underline\lambda_k^{-s} u_kW_k,
\]
and $\D\left( A^{-s}\right)\coloneqq\{u\mid A^{-s}u\in H\}$, more precisely $\D\left( A^{-s}\right)$ is the closure of $H$ in the norm
$|u|_{\D\left( A^{-s}\right)}\coloneqq\left(\sum_{k\in\N_0}\underline\lambda_k^{-2s}u_k^2\right)^{\frac{1}{2}}$.

We recall that for $s=\frac{1}{2}$ we have $\D(A^{\frac{1}{2}})=V$. For a more complete discussion
on the fractional powers of a compact operator we refer to~\cite[Chapter~II, Section~2.1]{Temam97}.

Given two more nonnegative real numbers $\sigma$ and $\alpha$, we define the Gevrey operator
\[
A^s e^{\sigma A^{\alpha}}u\coloneqq
\sum_{k\in\N_0}e^{\sigma \underline\lambda_k^{\alpha}}\underline\lambda_k^s u_kW_k,
\]
which domain is the Gevrey space $\D\left( A^s e^{\sigma A^{\alpha}}\right)
\coloneqq\left\{u\in H\mid A^s e^{\sigma A^{\alpha}}u\in H\right\}$.

Notice that, for given $s\ge0$, $\sigma\ge0$, and $\alpha\ge0$ the functions in~$\{W_k\mid k\in\N_0\}$ are also eigenfunctions
for $A^s$ and for $A^s e^{\sigma A^{\alpha}}$. Indeed for any $k\in\N_0$ it follows that
\[
A^sW_k=\underline\lambda_k^s W_k\quad\mbox{and}\quad
A^s e^{\sigma A^{\alpha}}W_k=e^{\sigma \underline\lambda_k^{\alpha}}
\underline\lambda_k^s W_k.
\]
Furthermore the operators $A^s$ and $A^s e^{\sigma A^{\alpha}}$ are selfadjoint; indeed
\[
\begin{array}{rcccl}
 (A^su,\,v)_H&=&\sum\limits_{k\in\N_0}\underline\lambda_k^s u_kv_k&=&(u,\,A^sv)_H,\\
 (A^s e^{\sigma A^{\alpha}}u,\,v)_H&=&\sum\limits_{k\in\N_0}e^{\sigma \underline\lambda_k^{\alpha}}\underline\lambda_k^s u_kv_k
 &=&(u,\,A^s e^{\sigma A^{\alpha}}v)_H.
\end{array}
\]

\begin{theorem} \label{T:main}
 Suppose that the Assumptions~\ref{A:A}, \ref{A:B}, \ref{A:Bnml} and~\ref{A:Fnml} hold, and let the strictly increasing
 sequence of (nonrepeated) eigenvalues $(\lambda_k)_{k\in\N_0}$ of the Stokes operator~$A$ satisfy, for some positive
 real numbers $\rho$ and $\xi$, the relation
 \begin{equation} \label{lambda_k}
  \lambda_k>\rho k^\xi,\quad\mbox{for all }k\in\N_0.
 \end{equation}
Further, let us be given  $\alpha\in(0,\,1)$ as in Assumption~\ref{A:Bnml}, $C_\FF$ and $ \zeta \ge 0$
as in Assumption~\ref{A:Fnml}, $\sigma>0$,
$s>\frac{d+2(\xi^{-1}+  2\zeta  -1)}{4}$, 
$h\in L^{\infty}(\R_0,\,\D(A^{s-\frac{1}{2}}e^{\sigma A^{\alpha}}))$, and $u_0\in\D(A^s)$.\\
Then, there are $T^*>0$ and a unique solution
\begin{equation}\label{reg_u}
 u \in L^{\infty} \left(  (0,T^*), D  \left( A^s e^{\sigma A^{\alpha}} \right) \right) 
\cap  L^{2} \left(  (0,T^*), D  \left( A^{s+\frac{1}{2}} e^{\sigma A^{\alpha}} \right) \right),
\end{equation}
for the Navier Stokes system \eqref{sys_u}.\\
Further, $T^*$ depends on
the data
$\left(|h|_{L^{\infty} (\R_0,\, D \left( A^{s- \frac{1}{2}}e^{\sigma A^{\alpha}} \right)} ,\, \left| A^s u_0  \right|_H  \right)$
and also on the constants $\nu$, $\lambda_1$, $d$, $s$, $\sigma$, $\alpha$, $\beta$, $C_\FF$, $ \zeta $, $\rho$, and $\xi$.
\end{theorem}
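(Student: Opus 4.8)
The strategy is the classical Foias--Temam energy method for Gevrey estimates, carried out at the level of the fractional power $A^s e^{\sigma A^\alpha}$ rather than $A^{1/2}e^{\sigma A^{1/2}}$, and combined with the combinatorial input from Assumptions~\ref{A:Bnml} and~\ref{A:Fnml}. First I would fix $\varphi(t)\coloneqq\min\{t,1\}$ (or any bounded nondecreasing Lipschitz function with $\varphi(0)=0$, $\dot\varphi\le1$) and introduce the time-dependent operator $\Phi(t)\coloneqq e^{\varphi(t)A^\alpha}$, setting $y(t)\coloneqq \Phi(t)u(t)$. Working with the Galerkin approximations $u^N=\sum_{k\le N}u_kW_k$, take the inner product of \eqref{sys_u} with $A^{2s}\Phi(t)^2 u^N = \Phi(t)A^{2s}\Phi(t)u^N$ in $H$. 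Using that $A$, $A^s$ and $\Phi$ all commute and are selfadjoint on the (finite-dimensional) range of the projection, this yields the differential inequality
\begin{equation*}
\frac12\frac{\ed}{\ed t}\left|A^s\Phi u^N\right|_H^2 + \nu\left|A^{s+\frac12}\Phi u^N\right|_H^2
\le \dot\varphi\left|A^{s+\frac\alpha2}\Phi u^N\right|_H^2
+\left|\left(A^s\Phi B(u^N),\,A^s\Phi u^N\right)_H\right|
+\left|\left(A^s\Phi h,\,A^s\Phi u^N\right)_H\right|.
\end{equation*}
Since $\alpha<1$, the term $\dot\varphi|A^{s+\alpha/2}\Phi u^N|_H^2$ is absorbed into $\frac\nu4|A^{s+1/2}\Phi u^N|_H^2$ by interpolation and Young's inequality (using $\dot\varphi\le1$ and $\lambda_1>0$), and the forcing term is handled by Cauchy--Schwarz since $h\in L^\infty(\R_0,\D(A^{s-1/2}e^{\sigma A^\alpha}))$ and $\varphi(t)\le1\le\sigma$ may be arranged (or a change of variables reduces to $\varphi(t)\le\sigma$). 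The whole game is therefore to bound the nonlinear term
\begin{equation*}
\left|\left(A^s\Phi B(u^N),\,A^s\Phi u^N\right)_H\right|
\le C\left|A^{s-\frac12}\Phi B(u^N)\right|_H\left|A^{s+\frac12}\Phi u^N\right|_H,
\end{equation*}
so that the first factor can be dominated by $C|A^s\Phi u^N|_H^{?}\,|A^{s+1/2}\Phi u^N|_H^{?}$ with the second exponent strictly less than $1$, giving a Riccati-type inequality $\frac{\ed}{\ed t}E_N\le -\frac\nu2 E_N^{1+\frac12}$-style bound (after absorbing) from which a uniform-in-$N$ bound on $(0,T^*)$ with $T^*$ depending only on the stated data follows by the standard ODE comparison argument; passing to the limit $N\to\infty$ and proving uniqueness are then routine.

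The heart of the matter, and the place where Assumptions~\ref{A:Bnml} and~\ref{A:Fnml} enter, is the estimate on $|A^{s-1/2}\Phi B(u^N)|_H$. Expanding $u^N=\sum_n P_n u^N$ into the nonrepeated-eigenvalue blocks and writing $B(u^N)=\sum_{n,m}\Pi\nabla^1_{P_nu^N}P_mu^N$, project onto $P_lH$: a term indexed by $(n,m,l)$ is nonzero only if $(B(P_nu^N+P_mu^N),w)_H\ne0$ for some $w\in P_lH$, which by Assumption~\ref{A:Bnml} (via Lemma~\ref{L:lambda_s} with $s$ replaced by the relevant exponent) forces $\lambda_l^s\le C_{(s,\alpha,\beta,\lambda_1)}(\lambda_n^s+\lambda_m^s)$. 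This exchanges a weight $\lambda_l^{s-1/2}$ on the output block for weights $\lambda_n,\lambda_m$ on the input blocks, at the cost of a fixed constant. Simultaneously, the exponential factor: because $e^{\varphi\lambda_l^\alpha}\le e^{\varphi(\lambda_n^\alpha+\lambda_m^\alpha+\beta)}=e^{\varphi\beta}e^{\varphi\lambda_n^\alpha}e^{\varphi\lambda_m^\alpha}$ directly by Assumption~\ref{A:Bnml}, the operator $\Phi$ on the output is controlled by $\Phi$ on each input times a constant $e^\beta$ — this is precisely the ``Gevrey'' algebra property that replaces the Fourier-support additivity used on the torus. Assumption~\ref{A:Fnml} is then invoked to control the number of pairs $(n,m)$ that can contribute to a given output block $l$ (and dually), converting the double sum over blocks into a product of single sums with only a polynomial loss $\lambda_n^\zeta$; this polynomial loss is exactly what dictates the threshold $s>\frac{d+2(\xi^{-1}+2\zeta-1)}{4}$ once one also uses \eqref{lambda_k} to convert $\sum_k\lambda_k^{-\text{(something)}}$ into a convergent series $\sum_k k^{-\xi\cdot(\text{something})}$. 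Combining the block-wise bounds with the three product-type estimates for $b$ in Assumption~\ref{A:B} (in the forms $|b(u,v,w)|\le C|u|_{L^4}|v|_{H^1}|w|_{L^4}$ etc., together with Sobolev embeddings $\D(A^s)\hookrightarrow L^\infty$ or $L^4$ valid for the stated range of $s$ via Assumption~\ref{A:A}) yields the desired inequality $|A^{s-1/2}\Phi B(u^N)|_H\le C|A^s\Phi u^N|_H\,|A^{s+1/2}\Phi u^N|_H^\theta\,|A^s\Phi u^N|_H^{1-\theta}$ with $\theta<1$, or more simply $|A^{s-1/2}\Phi B(u^N)|_H\le C|A^{s}\Phi u^N|_H^{3/2}\,\lambda_1^{-?}$ plus a controllable remainder.

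The main obstacle, then, is the bookkeeping in this nonlinear estimate: one must track three separate things at once — the algebraic weight $\lambda_l^{s-1/2}$ versus $\lambda_n,\lambda_m$, the exponential weight $e^{\varphi\lambda_l^\alpha}$ versus $e^{\varphi\lambda_n^\alpha},e^{\varphi\lambda_m^\alpha}$, and the cardinality of the interacting index sets — and assemble them so that the resulting power of $|A^{s+1/2}\Phi u^N|_H$ on the right is strictly subcritical, which is what pins down the precise lower bound on $s$ in terms of $d$, $\xi$, and $\zeta$. Everything after that (absorbing the $\dot\varphi$ term using $\alpha<1$, the Riccati/ODE comparison to extract $T^*$ with the claimed dependence, the weak-limit passage in $N$, and the uniqueness via a difference estimate in the $\D(A^{s-1/2})$ norm) is standard and follows the template of \cite{FoiasTemam89,Liu92,CaoRammahaTiti99} with the fractional-power weights carried along verbatim. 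I would present the nonlinear lemma as a self-contained statement first, then feed it into the energy estimate.
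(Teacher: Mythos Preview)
Your proposal is correct and follows the paper's route in all essentials: the time-weight $\varphi(t)=\min(\sigma,t)$, the substitution $u^*=e^{\varphi A^\alpha}u$, the $A^{2s}u^*$-energy identity, the eigenspace-block expansion with Assumption~\ref{A:Bnml} splitting the exponential weight and Lemma~\ref{L:lambda_s} redistributing the polynomial weight, the cardinality bound from Assumption~\ref{A:Fnml}, the summability from~\eqref{lambda_k}, and the Riccati comparison for $T^*$ are exactly what the paper does (it packages the nonlinear bound as a separate Lemma~\ref{lemmaBu}, just as you suggest). The one spot to sharpen in your write-up is the trilinear step: rather than first passing to $|A^{s-1/2}\Phi B(u^N)|_H$ via Cauchy--Schwarz, the paper keeps the inner product $(B(u),A^{2s}e^{2\varphi A^\alpha}u)_H$ intact, uses the skew-symmetry $b(u,v,w)=-b(u,w,v)$ to put the $H^1$-slot on $P_l u$ and the $L^\infty$-slot on the smaller-index block $P_n u$, and then applies Agmon \emph{block-wise}, $|P_n u^*|_{L^\infty}\le C\lambda_n^{d/4}|P_n u^*|_H$ --- this is precisely where the $d$ enters the threshold $s>\frac{d+2(\xi^{-1}+2\zeta-1)}{4}$, so make that step explicit rather than appealing to a global Sobolev embedding.
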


The proof is given below, in Section~\ref{sS:proofT:main}.

\subsection{Some preliminary results}
We derive some preliminary results that we will need in the proof of Theorem~\ref{T:main}.
Let $u$ solve system~\eqref{sys_u} and let $\sigma> 0$, $\alpha\in(0,\,1)$ and~$ \zeta $ be real numbers as in
Theorem~\ref{T:main}, and set $\varphi(t) \coloneqq \min (\sigma, t)$.
Following the Remark in~\cite[Section~2.3(iii)]{FoiasTemam89}, we can see that
the function $u^*(t) = e^{\varphi(t) A^{\alpha}} u(t)$
satisfies
$\partial_t u^* = \frac{d \varphi}{dt} A^{\alpha} e^{\varphi A^{\alpha}} u
+ e^{\varphi A^{\alpha}} \partial_t u$,
and denoting $h^*(t) \coloneqq e^{\varphi(t) A^{\alpha}} h(t)$, it follows that~$u^*$ solves
\begin{subequations}\label{sys_u*}
\begin{align}
\partial_t u^* + \nu Au^* + e^{\varphi A^{\alpha}} B(u)
+ h^* - \frac{d \varphi}{dt} A^{\alpha}u^* &= 0,\label{difeq_u*}\\
u^*(0) &= u_0. \label{icu*}
\end{align}
\end{subequations}
Now, let $s \ge 0$ be another nonnegative number and multiply \eqref{difeq_u*} by $A^{2s}u^*$, formally we obtain 
\begin{align*}
&\quad\left( \partial_t u^*, A^{2s} u^* \right)_H + \nu \left( Au^*, A^{2s} u^* \right)_H\\
&= - \left( e^{\varphi A^{\alpha}} B(u) , A^{2s} u^* \right)_H 
- \left( h^*, A^{2s} u^* \right)_H + \frac{d \varphi}{dt} \left( A^{\alpha}u^* , A^{2s} u^* \right)_H .
\end{align*}
From the fact that $\left( e^{\varphi A^{\alpha}} B(u) , A^{2s} u^* \right)_H=\left( B(u) , A^{2s} e^{\varphi A^{\alpha}} u^* \right)_H$
and $ \left| \frac{d \varphi}{dt}  \right| \le 1$ for all $t \ge 0$, it follows
\begin{align}
&\frac{1}{2} \frac{d}{dt} \left| A^s u^*  \right|^2_H + \nu  \left| A^{s+ \frac{1}{2}} u^*  \right|^2_H \notag\\
\le& \left| \left( B(u), A^{2s} e^{\varphi A^{\alpha}} u^* \right)_H   \right| + \left| A^{s - \frac{1}{2}} h^* \right|_H
\left| A^{s + \frac{1}{2}} u^* \right|_H + \left| A^{s+\alpha-\frac{1}{2}} u^* \right|_H
\left| A^{s + \frac{1}{2}} u^* \right|_H .\label{norm_ineq}
\end{align}
Now, we find an appropriate bound for the term $\left| \left( B(u), A^{2s} e^{\varphi A^{\alpha}} u^* \right)_H   \right|$.
Recall the strictly increasing sequence $(\lambda_k)_{k\in\N_0}$ of all the distinct eigenvalues of
the Stokes operator~$A$ and the orthogonal projections $P_k:H\to P_kH$ onto the $\lambda_k$-eigenspace; see~\eqref{projPk} above.
We observe that for any $u \in H$, we may write
\begin{equation}\label{uPku}
u = \sum_{k \in \N_0}{P_ku}.
\end{equation}
\begin{remark}
Given nonnegative real numbers $s$, $\alpha$, and $\sigma$, $u\in\D\left( A^s e^{\sigma A^{\alpha}}\right)$, and $l\in\N_0$,
we have $P_l(A^{s} e^{\sigma A^{\alpha}} u)=\lambda_l^{s} e^{\sigma \lambda_l^{\alpha}} P_l u$,
and  $|u|_{\D\left( A^s e^{\sigma A^{\alpha}}\right)}^2
=\sum \limits_{k \in \N_0} e^{2\sigma \lambda_k^{\alpha}}\lambda_k^{2s} \left|P_k u\right|^2 $.
\end{remark}

From~\eqref{uPku} and Assumption~\ref{A:B}, we may write
\begin{align*}
&\quad\left(  B(u), A^{2s}e^{\varphi A^{\alpha}} u^* \right)_H =\sum_{(m,n,l) \in \N_0^3}
b \left( P_m u, P_n u, P_l(A^{2s} e^{\varphi A^{\alpha}} u^*) \right) \\
 &=\frac{1}{2}\sum_{(m,n,l) \in \N_0^3}
 \left( B(P_m u + P_n u),\,\lambda_l^{2s} e^{2\varphi \lambda_l^{\alpha}} P_l u \right)_H 
 = \sum_{ \begin{subarray}{l} m \in \N_0 \\ n < m \\  l \in   \FF_{n,\,m}^\bullet 
\end{subarray}} \left( B(P_m u + P_n u),\,\lambda_l^{2s} e^{2\varphi \lambda_l^{\alpha}} P_l u \right)_H \\
&= -\sum_{ \begin{subarray}{l} m \in \N_0 \\ n < m \\  l \in   \FF_{n,\,m}^\bullet   \end{subarray}}
b \left( P_n u,  \lambda_l^{2s} e^{2\varphi \lambda_l^{\alpha}} P_l u,  P_m u   \right)-\sum_{ \begin{subarray}{l} m \in \N_0 \\ n < m \\  l \in   \FF_{n,\,m}^\bullet   \end{subarray}}
b \left( P_n u,  \lambda_l^{2s} e^{2\varphi \lambda_l^{\alpha}} P_l u,  P_m u   \right). 
\end{align*}
%
%
Hence by Assumptions~\ref{A:A}, \ref{A:B} and~\ref{A:Bnml}, we can derive that
\begin{align*}
&\quad\left|\left(  B(u), A^{2s}e^{\varphi A^{\alpha}} u^* \right)_H \right|
\le 2C  \sum_{ \begin{subarray}{l} m \in \N_0 \\ n < m \\ 
l \in   \FF_{n,\,m}^\bullet \end{subarray}} 
\left|   P_n u \right|_{ L^{\infty} (\Omega,\,T\Omega) }
\lambda_l^{2s} e^{2\varphi \lambda_l^{\alpha}}\left| P_l u \right|_{ H^{1} (\Omega,\,T\Omega) }
\left|  P_m u \right|_{ L^{2} (\Omega,\,T\Omega) }\\
&\le 2C  \sum_{ \begin{subarray}{l} m \in \N_0 \\ n < m \\ 
l \in   \FF_{n,\,m}^\bullet \end{subarray}} 
\left|   P_n u \right|_{ L^{\infty} (\Omega,\,T\Omega) }
\lambda_l^{2s}\ex^{\varphi (\lambda_l^{\alpha}+\lambda_n^{\varsigma}+\lambda_m^{\alpha}+\beta)}
\left|A^{\frac{1}{2}}  P_l u \right|_{ L^{2} (\Omega,\,T\Omega) }
\left|  P_m u \right|_{ L^{2} (\Omega,\,T\Omega) }\\
&\le 2C  \sum_{ \begin{subarray}{l} m \in \N_0 \\ n < m \\ 
l \in   \FF_{n,\,m}^\bullet \end{subarray}} \ex^{\varphi \beta}
\left|   P_n u^* \right|_{ L^{\infty} (\Omega,\,T\Omega) }
\lambda_l^{2s+\frac{1}{2}}
\left|P_l u^* \right|_{ H }
\left|  P_m u^* \right|_{H}.
\end{align*}
From a suitable Agmon inequality (cf.~\cite{Temam95}, Section~2.3), it follows that
$\left|   P_n u^* \right|_{ L^{\infty} (\Omega,\,T\Omega) } 
\le   C_1\left|   P_n u^* \right|_{ L^{2} (\Omega,\,T\Omega) }^{\frac{4-d}{4} }  
\left|   P_n u^* \right|_{ H^{2} (\Omega,\,T\Omega) }^{\frac{d}{4} }  $  and 
\begin{equation} \label{Bu_A^2s_ineq}
\left|\left(  B(u), A^{2s}e^{\varphi A^{\alpha}} u^* \right)_H \right| \le C_2\ex^{\sigma \beta} 
\sum_{ \begin{subarray}{l} m \in \N_0 \\ n < m \\  l \in   \FF_{n,\,m}^\bullet \end{subarray}} 
{ \lambda_n^{\frac{d}{4}} \lambda_l^{2s + \frac{1}{2}}  
\left| P_n u^* \right|_H  \left| P_m u^* \right|_H  \left| P_l u^* \right|_H }. 
\end{equation}

\begin{remark}
Notice that the Agmon inequalities we find in~\cite[Section~2.3]{Temam95} concern the case~$\Omega$ is a subset of~$\R^d$. However they hold
also for a boundaryless manifold~$\CC$, because we can cover~$\CC$ by a finite number of charts and use a partition of unity argument.
Recall that the Sobolev spaces on a manifold may be defined by means of an atlas of~$\CC$ (cf.~\cite[Chapter~4, Section~3]{Taylor97}). They
hold also for smooth manifolds~$\Omega$ with smooth boundary~$\p\Omega$ (cf. the discussion after Equation~(4.11)
in~\cite[Chapter~4, Section~4]{Taylor97}).
\end{remark}

\begin{lemma} \label{lemmaBu} 
Suppose that the Assumptions~\ref{A:A}, \ref{A:B}, \ref{A:Bnml} and~\ref{A:Fnml} hold, and let the strictly increasing
 sequence of (nonrepeated) eigenvalues $(\lambda_k)_{k\in\N_0}$ of the Stokes operator~$A$ satisfy~\eqref{lambda_k}.
 Then, for any given
$ s > \frac{d + 2 \left(  \xi^{-1}+  2\zeta   -1 \right)}{4}$, there exists $C_B\in\R_0$ such that 
\begin{align*}
\left|\left(  B(u), A^{2s}e^{\varphi A^{\alpha}} u^* \right)_H \right| &\le C_B \left|  A^s u^*  \right|^2 
\left|  A^{s+ \frac{1}{2}} u^*  \right|,\qquad\mbox{ if }\; 4s \ge d+  2 \left(  \xi^{-1}+  2\zeta   +1 \right);\\
\left|\left(  B(u), A^{2s}e^{\varphi A^{\alpha}} u^* \right)_H \right|
&\le C_B \left|  A^s u^*  \right|^{\frac{6-\left( d-4s + 2\xi^{-1}+2  2\zeta   \right)}{4}}
\left|  A^{s+ \frac{1}{2}} u^*  \right|^{\frac{6+d-4s + 2\xi^{-1}+2  2\zeta  }{4}},\\
&\hspace*{12.75em}\mbox{ if }\; 4s <  d+ 2 \left(  \xi^{-1}+  2\zeta   +1 \right).\\
\end{align*}
Further, $C_B$ depends on $d$, $s$, $\sigma$, $\alpha$, $\beta$, $C_\FF$, $ \zeta $, $\rho$, and $\xi$.
\end{lemma}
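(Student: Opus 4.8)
The plan is to bound, starting from~\eqref{Bu_A^2s_ineq}, the trilinear sum
\[
S\coloneqq\sum_{\substack{m\in\N_0\\ n<m\\ l\in\FF_{n,\,m}^\bullet}}\lambda_n^{\frac d4}\lambda_l^{2s+\frac12}\,x_nx_mx_l,\qquad\text{where }\;x_k\coloneqq\left|P_ku^*\right|_H,
\]
so that $\left|A^ru^*\right|^2=\sum_{k\in\N_0}\lambda_k^{2r}x_k^2$ for every $r\ge0$. First I would record two structural facts. Since $l\in\FF_{n,\,m}^\bullet$ with $n<m$, there exists $(u,v,w)\in P_nH\times P_mH\times P_lH$ with $(B(u+v),w)_H\ne0$; hence Lemma~\ref{L:lambda_s}, together with $\lambda_n<\lambda_m$, yields $\lambda_l\le C'\lambda_m$ for a constant $C'$ depending only on $(s,\alpha,\beta,\lambda_1)$. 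Secondly, for each fixed $n$ the condition ``$n<m$ and $l\in\FF_{n,\,m}^\bullet$'' coincides with ``$m\in\FF_{n,\,\bullet}^l$'', so by Assumption~\ref{A:Fnml} the number of admissible $l$ for a given $m$ and the number of admissible $m$ for a given $l$ are both at most $C_\FF\lambda_n^\zeta$.

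Next, for fixed $n$ I would apply the Cauchy--Schwarz inequality over the admissible pairs $(m,l)$. Using $\lambda_l\le C'\lambda_m$ to estimate $\lambda_l^{2s+\frac12}x_mx_l\le(C')^s(\lambda_m^sx_m)(\lambda_l^{s+\frac12}x_l)$ and then Cauchy--Schwarz,
\[
\sum_{\substack{n<m\\ l\in\FF_{n,\,m}^\bullet}}\lambda_l^{2s+\frac12}x_mx_l\le(C')^s\Bigl(\sum_{\substack{n<m\\ l\in\FF_{n,\,m}^\bullet}}\lambda_m^{2s}x_m^2\Bigr)^{\frac12}\Bigl(\sum_{\substack{n<m\\ l\in\FF_{n,\,m}^\bullet}}\lambda_l^{2s+1}x_l^2\Bigr)^{\frac12}.
\]
By the cardinality bounds, the first factor under the root is $\le C_\FF\lambda_n^\zeta\left|A^su^*\right|^2$ (each $m$ occurs at most $C_\FF\lambda_n^\zeta$ times) and the second is $\le C_\FF\lambda_n^\zeta\left|A^{s+\frac12}u^*\right|^2$ (each $l$ occurs at most $C_\FF\lambda_n^\zeta$ times). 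Multiplying by $\lambda_n^{d/4}x_n$, summing over $n$, and recalling the factor $C_2\ex^{\sigma\beta}$ from~\eqref{Bu_A^2s_ineq}, one obtains
\[
\left|\left(B(u),A^{2s}e^{\varphi A^\alpha}u^*\right)_H\right|\le C\,\left|A^su^*\right|\,\left|A^{s+\frac12}u^*\right|\sum_{n\in\N_0}\lambda_n^{\frac d4+\zeta}x_n,\qquad C=C_2(C')^sC_\FF\ex^{\sigma\beta}.
\]

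Finally I would estimate the leftover scalar series by interpolation. For $\theta\in[s,s+\frac12]$, writing $\theta=s+\frac t2$ with $t\in[0,1]$, Cauchy--Schwarz and the interpolation inequality $\left|A^\theta u^*\right|\le\left|A^su^*\right|^{1-t}\left|A^{s+\frac12}u^*\right|^{t}$ give
\[
\sum_{n\in\N_0}\lambda_n^{\frac d4+\zeta}x_n\le\Bigl(\sum_{n\in\N_0}\lambda_n^{\frac d2+2\zeta-2\theta}\Bigr)^{\frac12}\left|A^su^*\right|^{1-t}\left|A^{s+\frac12}u^*\right|^{t}.
\]
By~\eqref{lambda_k} one has $\lambda_n^{\frac d2+2\zeta-2\theta}<\rho^{\frac d2+2\zeta-2\theta}n^{\xi(\frac d2+2\zeta-2\theta)}$, so the series converges as soon as $\xi\bigl(2\theta-\tfrac d2-2\zeta\bigr)>1$, i.e. $\theta>\tfrac{d+4\zeta+2\xi^{-1}}4$; such a $\theta\le s+\tfrac12$ exists precisely because $s>\tfrac{d+2(\xi^{-1}+2\zeta-1)}4$. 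Choosing $\theta=s$ (so $t=0$) when $4s\ge d+2(\xi^{-1}+2\zeta+1)$ gives the first estimate, while choosing $t=\tfrac{2+d-4s+2\xi^{-1}+4\zeta}4\in(0,1)$, hence $\theta=\tfrac{4s+2+d+2\xi^{-1}+4\zeta}8>\tfrac{d+4\zeta+2\xi^{-1}}4$, when $4s<d+2(\xi^{-1}+2\zeta+1)$ gives the second; in either case the resulting constant has the asserted dependence, since the value of the convergent series depends only on $\rho$, $\xi$, $d$, $s$, $\zeta$. I expect the main obstacle to be precisely this bookkeeping: one has to use Assumption~\ref{A:Fnml} twice — once through $\FF_{n,\,m}^\bullet$ and once through $\FF_{n,\,\bullet}^l$ — so as to load the entire combinatorial loss onto the single ``Agmon index'' $n$, after which the growth condition~\eqref{lambda_k}, supplemented by the interpolation in the borderline regime, makes the remaining $n$-series summable.
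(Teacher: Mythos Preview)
Your proof is correct and follows essentially the same route as the paper: you split $\lambda_l^{2s+\frac12}$ as $\lambda_l^{s+\frac12}\lambda_m^s$ via Lemma~\ref{L:lambda_s} (using $\lambda_n<\lambda_m$), exploit the equivalence $l\in\FF_{n,m}^\bullet\Leftrightarrow m\in\FF_{n,\bullet}^l$ together with Assumption~\ref{A:Fnml} to Cauchy--Schwarz the $(m,l)$-sum and load both cardinality factors onto~$n$, and then handle the remaining $n$-series by a second Cauchy--Schwarz plus~\eqref{lambda_k} and interpolation; your parameter~$t$ is exactly the paper's~$\bar\gamma=\frac{d-4s+2(\xi^{-1}+2\zeta+1)}{4}$. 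The only cosmetic differences are that the paper applies Cauchy--Schwarz globally over $(n,m,l)$ rather than freezing~$n$ first, and phrases the interpolation step through $\gamma$ rather than~$\theta=s+\tfrac t2$.
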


\begin{proof}
From~\eqref{Bu_A^2s_ineq}, Assumption~\ref{A:Bnml} and Lemma~\ref{L:lambda_s}, it follows that 
\begin{align*}
\left|\left(  B(u), A^{2s}e^{\varphi A^{\alpha}} u^* \right)_H \right| 
\le K \sum_{\begin{subarray}{l} m \in \N_0 \\ n < m \\  l \in   \FF_{n,\,m}^\bullet \end{subarray} }
{ \lambda_n^{\frac{d}{4}} \lambda_l^{s + \frac{1}{2}}  \lambda_m^{s}  
\left| P_n u^* \right|_H  \left| P_m u^* \right|_H  \left| P_l u^* \right|_H } , 
\end{align*}
with $K=K(s,\,\sigma,\,\alpha,\,\beta,\,\lambda_1)$. Now we notice that for any triple $(m,n,l) \in \N_0^3$ with $n<m$ we have that
\[
l \in   \FF_{n,\,m}^\bullet\Leftrightarrow \left( B(P_nH+P_mH),\,P_lH\right)_H\ne\emptyset\Leftrightarrow m \in   \FF_{n,\,\bullet}^l;
\]
thus, by the Cauchy inequality, we obtain that
\begin{align*}
&\quad \left|\left(  B(u), A^{2s}e^{\varphi A^{\alpha}} u^* \right)_H \right|^2\\
&\le  K \Biggl(\sum_{ \begin{subarray}{l} m \in \N_0 \\ n < m \\  l \in   \FF_{n,\,m}^\bullet \end{subarray}}
 \lambda_n^{\frac{d}{4}}   \left| P_n u^* \right|_H   \lambda_m^{2s} 
\left| P_m u^* \right|^2_H    \Biggr)^{\frac{1}{2}}
\Biggl(\sum_{ \begin{subarray}{l} l \in \N_0 \\ n < m \\  m \in   \FF_{n,\,\bullet}^l \end{subarray}}
 \lambda_n^{\frac{d}{4}}   \left| P_n u^* \right|_H   \lambda_l^{2s+1} 
\left| P_l u^* \right|^2_H    \Biggr)^{\frac{1}{2}}.
\end{align*}
From Assumption~\ref{A:Fnml} we obtain
\begin{align*}
&\quad \left|\left(  B(u), A^{2s}e^{\varphi A^{\alpha}} u^* \right)_H \right|^2\\
&\le  K C_\FF \Biggl(\sum_{ n \in \N_0}
 \lambda_n^{\frac{d}{4}+ \zeta }   \left| P_n u^* \right|_H \Biggr) \Biggl(\sum_{ m \in \N_0} \lambda_m^{2s} 
\left| P_m u^* \right|^2_H    \Biggr)^{\frac{1}{2}}
\Biggl(\sum_{ l\in\N_0}
 \lambda_l^{2s+1} 
\left| P_l u^* \right|^2_H    \Biggr)^{\frac{1}{2}}.
\end{align*}

%
%
Now, again thanks to the Cauchy inequality, for $\gamma\in\R$ we find
\begin{align}
&\quad\left|\left(  B(u), A^{2s}e^{\varphi A^{\alpha}} u^* \right)_H \right|\notag\\  &\le  K C_{\FF}
\left(\sum_{n \in \N_0}{ \lambda_n^{\frac{d}{2}+  2\zeta  -2s-\gamma}  } \right) ^{\frac{1}{2}}
\left|A^{s+ \frac{\gamma}{2}}u^*\right|_H\left|A^{s}u^*\right|_H \left|A^{s+ \frac{1}{2}}u^*\right|_H. \label{Bu_A^2s_ineq2}
\end{align}
Since $s > \frac{d + 2 \xi ^{-1}+  4\zeta   -2}{4}$, we obtain 
$\frac{d}{2} - 2s +  2\zeta  < 1 -\xi^{-1}$. Thus, we may set
$\gamma \in \left(\frac{d}{2} - 2s + \xi^{-1}+  2\zeta   , 1  \right) $; which implies
$ \frac{d}{2} - 2s +  2\zeta   - \gamma < -\xi^{-1} $ and
$ \delta := \left( \frac{d}{2} - 2s +  2\zeta   - \gamma  \right)\xi <-1$.
From~\eqref{lambda_k}, it follows that 
\begin{equation}\label{sum_ndelta}
\sum_{n \in \N_0}{\lambda_n^{\frac{d}{2} - 2s +  2\zeta   - \gamma}}
\le \rho^{\frac{d}{2} - 2s +  2\zeta   - \gamma} \sum_{n \in \N_0}{n^{\delta}}
\eqqcolon C_{d,s,\rho,\xi, \zeta ,\gamma} < + \infty.
\end{equation}
and, choosing in particular $ \gamma =\bar\gamma\coloneqq \frac{d-4s + 2\left( \xi^{-1} +  2\zeta  +1 \right) }{4}$,
from~\eqref{Bu_A^2s_ineq2} and~\eqref{sum_ndelta},
it follows that 
\[
\left|\left(  B(u), A^{2s}e^{\varphi A^{\alpha}} u^* \right)_H \right| 
\le KC_{d,s,\rho,\xi, \zeta }\left|  A^{s+ \frac{\bar\gamma}{2}} u^*  \right|_H \left|  A^{s} u^*  \right|_H
\left|  A^{s+\frac{1}{2}} u^*  \right|_H.
\]
If $\bar\gamma \le 0$, that is if $4s \ge d + 2 \left( \xi^{-1} +  2\zeta  +1  \right)$, then 
\[
\left| \left( B(u^*), A^{2s} u \right)_H   \right| \le KC_{d,s,\rho,\xi}  \left|  A^{s} u^*  \right|_H^2
\left|  A^{s+\frac{1}{2}} u^*  \right|_H.
\]
If $\bar\gamma \in (0,1)$, that is if $d + 2\left(  \xi^{-1} +  2\zeta  -1  \right)  < 4s < d+ 2\left(  \xi^{-1}+  2\zeta   +1  \right) $, then by
an interpolation argument (cf. \cite{LioMag72-I}, Chapter~1), we can obtain that
\begin{align*}
\left|\left(  B(u), A^{2s}e^{\varphi A^{\alpha}} u^* \right)_H \right|
&\le KC_{d,s,\rho,\xi, \zeta } C_1  \left|  A^{s} u^*  \right|_H^{1+ (1-\bar\gamma)}
\left|  A^{s+\frac{1}{2}} u^*  \right|_H^{1+\bar\gamma} \\
&= KC_{d,s,\rho,\xi, \zeta } C_1 \left|  A^{s} u^*  \right|_H^{\frac{-d+4s-2 \xi^{-1}-  4\zeta   +6}{4}}
\left|  A^{s+\frac{1}{2}} u^*  \right|_H^{\frac{d-4s+2 \xi^{-1}+  4\zeta   +6}{4}},
\end{align*}
which completes the proof of the lemma.
\end{proof}

\subsection{Proof of Theorem~\ref{T:main}}\label{sS:proofT:main}
We look for $u$ in the form $u = e^{-\varphi(t) A^{\alpha}} u^*$ where $u^*$ solves \eqref{sys_u*}.
We will use Lemma~\ref{lemmaBu}, which suggests us to consider two cases. 
\subsubsection{The case $4s< d + 2\left( \xi^{-1}+  2\zeta   +1  \right)$. Existence}
We start by observing that
\[
\left|A^{s+\alpha-\frac{1}{2}} u^* \right|_H\le \lambda_1^{\alpha-\frac{1}{2}}\left|A^{s} u^* \right|_H,
\quad\mbox{if } \alpha\le\frac{1}{2},
\]
and, by an interpolation argument
\[
\left|A^{s+\alpha-\frac{1}{2}} u^* \right|_H
\le \left|A^{s} u^* \right|_H^{2(1-\alpha)}\left|A^{s+\frac{1}{2}} u^* \right|_H^{2\alpha-1},
\quad\mbox{if } \frac{1}{2}<\alpha<1.
\]
Next, since $4s> d + 2\left( \xi^{-1} +  2\zeta  -1  \right) $, we have $\frac{6 + d - 4 s + 2 \xi^{-1}+  4\zeta  }{4}< 2$. Thus, we can
set $ p = \frac{8}{6 + d - 4s + 2 \xi^{-1}+  4\zeta  } > 1$, and $q$ such that $ \frac{1}{p}+ \frac{1}{q} =1$, that is,
$ \frac{1}{q} = \frac{2- \left( d-4s + 2 \xi^{-1} +  4\zeta   \right)}{8}$.

From~\eqref{norm_ineq}, Lemma~\ref{lemmaBu},
and suitable Young inequalities, we derive that
\begin{align}
\frac{d}{dt} \left|A^s u^* \right|^2_H   + \frac{4\nu}{3}\left| A^{s+\frac{1}{2}} u^* \right|_H^2
&\le 2^q \left( \frac{3}{\nu} \right)^{\frac{q}{p}} C_B^q 
\left|  A^{s} u^*  \right|_H^{\left(\frac{-d+4s-2 \xi^{-1} -  4\zeta  +6}{4} \right)q} + \frac{3}{\nu} 
\left|A^{s-\frac{1}{2}} h^* \right|^2_H\notag\\
&\quad  + \left|A^{s+\alpha-\frac{1}{2}} u^* \right|_H\left|A^{s+\frac{1}{2}} u^* \right|_H.\label{dAsu*}
\end{align}
Notice that  in the case $\alpha\in(0,\,\frac{1}{2}]$ we have
\[
 \left|A^{s+\alpha-\frac{1}{2}} u^* \right|_H\left|A^{s+\frac{1}{2}} u^* \right|_H
 \le C_{\nu}\left|A^{s} u^* \right|_H^2+\frac{\nu}{3}\left|A^{s+\frac{1}{2}} u^* \right|_H^2,
\]
and in the case $\alpha\in(\frac{1}{2},\,1)$ we have
\begin{align*}
 \left|A^{s+\alpha-\frac{1}{2}} u^* \right|_H\left|A^{s+\frac{1}{2}} u^* \right|_H
 &\le C_{\nu,\alpha}\left|A^{s} u^* \right|_H^{2(1-\alpha)\frac{2}{3-2\alpha}}
 +\frac{\nu}{3}\left|A^{s+\frac{1}{2}} u^* \right|_H^2\\
 &\le C_{\nu,\alpha}\left(\left|A^{s} u^* \right|_H+1\right)^2
 +\frac{\nu}{3}\left|A^{s+\frac{1}{2}} u^* \right|_H^2\\
 &\le 2C_{\nu,\alpha}\left(\left|A^{s} u^* \right|_H^2+1\right)
 +\frac{\nu}{3}\left|A^{s+\frac{1}{2}} u^* \right|_H^2,
\end{align*}
because $0<\frac{4(1-\alpha)}{3-2\alpha}<2$.

Next we observe that $\left(\frac{-d+4s-2 \xi^{-1} -  4\zeta  +6}{4} \right)q = 2+ q>3$, and from Proposition~\ref{P:ab_s} it follows
$\left|  A^{s} u^*  \right|_H^{2+q}+1\le\left(\left|  A^{s} u^*  \right|_H^{2}+1\right)^{\frac{2+q}{2}}$.
Therefore, from~\eqref{dAsu*}, we can obtain
\begin{align*}
&\quad\frac{d}{dt} \left|A^s u^* \right|^2_H   + \nu\left| A^{s+\frac{1}{2}} u^* \right|_H^2\\
&\le K_1
\left|  A^{s} u^*  \right|_H^{2+q} + \frac{3}{\nu} 
\left|A^{s-\frac{1}{2}} h^* \right|^2_H
+ K_2(\left|A^{s} u^* \right|_H^2+1)\\
&\le (K_1+K_2)
\left(\left|  A^{s} u^*  \right|_H^{2}+1\right)^{\frac{2+q}{2}} + \frac{3}{\nu} 
\left|A^{s-\frac{1}{2}} h^* \right|^2_H,
\end{align*}
with $K_1+K_2$ depending on $\nu$, $\lambda_1$, $d$, $s$, $\sigma$, $\alpha$, $\beta$, $\rho$, $\xi$, $ \zeta $, and $C_\FF$.\\
Now, setting $K_3\coloneqq K_1+K_2+\frac{3}{\nu} 
\left|A^{s-\frac{1}{2}} h^* \right|_{L^\infty((0,\,+\infty),\,H)}^2$, we arrive to
\begin{equation}\label{dAsu*2}
 \quad\frac{d}{dt} \left|A^s u^* \right|^2_H   + \nu\left| A^{s+\frac{1}{2}} u^* \right|_H^2
\le K_3
\left(\left|  A^{s} u^*  \right|_H^{2}+1\right)^{\frac{2+q}{2}}
\end{equation}
and, in particular, to
\[
\frac{d}{dt} y
\le K_3 y^{\frac{{2+q}}{2}},\quad\mbox{with }y(t)\coloneqq \left|A^s u^*(t) \right|^2_H + 1,
\]
that is,
$ \frac{d}{dt} y^{\gamma} \ge \gamma K_3$ with $ \gamma \coloneqq 1- (\frac{2+q}{2})= -\frac{q}{2}<0 $.
Integrating over the interval $(0,\,t)$, it follows that $ y^{\gamma}(t) \ge y^{\gamma}(0) - (\frac{q}{2}) K_3t $.
If we set $T^*$ such that $ (\frac{q}{2}) K_{B, \nu} T^* \le (\frac{1}{2}) y^{\gamma}(0)$, that is if
$ T^* \le \frac{y^{\gamma}(0)}{q K_{B, \nu}}$, then  $  y^{-\gamma}(t)  \le 2 y^{-\gamma}(0) $, for all $t \in \left[ 0, T^* \right]$.
Thus, we obtain 
\[
 \left|A^s u^*(t) \right|^2_H + 1 \le 4^{\frac{1}{q}} \left( \left|A^s u(0) \right|^2_H + 1 \right)
 \quad \text{for all }t \in \left[0, T^* \right],
\]
from which, together with $u(0)=u_0\in\D(A^s)$ and \eqref{dAsu*2}, we can conclude that
\begin{equation}\label{u^*}
u^* \in  L^{\infty} \left( \left(0,T^* \right), D(A^s)  \right) \cap  L^{2} \left( \left(0,T^* \right), D(A^{s+\frac{1}{2}})  \right) 
\end{equation}
which implies~\eqref{reg_u}.
\subsubsection{The case $4s \ge d + 2\left( \xi^{-1} +  2\zeta  +1  \right)$. Existence}
Using the corresponding inequality from~\ref{lemmaBu}, it is straightforward to check that all the arguments from
the first case, $4s < d + 2\left( \xi^{-1} +  2\zeta  +1  \right)$,
can be repeated by taking $p = q =2$. We will arrive again to the conclusions \eqref{u^*}, and~\eqref{reg_u}.

\subsubsection{Uniqueness} It remains to check the uniqueness of $u$. Let $v$ be another solution for~\eqref{sys_u}, and set $\eta=v-u$.
We start by noticing that, from~\eqref{reg_u}, with nonnegative
$(s,\,\sigma,\,\alpha)\in[0,\,+\infty)^3$, we have in particular that $u$ is a weak solution:
\[
 u \in L^{\infty} \left(  (0,T^*), H \right)
\cap  L^{2} \left(  (0,T^*), D  \left( A^{\frac{1}{2}} \right) \right).
\]
In the case $d=2$, it is well known that the uniqueness of $u$ will follow from the estimate
\begin{align*}
 |(B(v)-B(u),\,\eta)_H|
&=|b(\eta,\,u,\,\eta)|\le|\eta|_{L^4(\Omega,\,T\Omega)}|\eta|_{L^4(\Omega,\,T\Omega)}|u|_{H^1(\Omega,\,T\Omega)}\\
&\le C|\eta|_{H^{\frac{1}{2}}(\Omega,\,T\Omega)}^2|u|_{H^1(\Omega,\,T\Omega)}
\le C_1|\eta|_H|A^{\frac{1}{2}}\eta|_H|A^{\frac{1}{2}}u|_H 
\end{align*}
(see, e.g.,~\cite[Chapter~3, Section~3.3, Theorem~3.2]{Temam01}).

In the case $d=3$. Since $s>\frac{d-2}{4}=\frac{1}{4}$, again from~\eqref{reg_u}, we also have that
\[
 u \in L^{\infty} \left(  (0,T^*), D  \left( A^{s_1} \right) \right) \subseteq L^{\infty} \left(  (0,T^*), H^{2s_1}(\Omega,\,\R^3) \right)
 \subset L^{r_1} \left(  (0,T^*), L^{r_2}(\Omega,\,\R^3) \right)
\]
with $s_1<s$ and $s_1\in(\frac{1}{4},\,\frac{1}{2}]$, $r_1>1$ and $r_2=\frac{2d}{d-4s_1}>3$,
by the Sobolev embedding Theorem~(cf.~\cite[Section~4.4, Corollary~4.53]{DemengelDem12}).
Now, the uniqueness of $u$
follows from the fact that for $r_1$ big enough we have that $\frac{2}{r_1}+\frac{d}{r_2}\le1$,
and from~\cite[Chapter~1, Section~6.8, Theorem~6.9]{Lions69}).
\qed

\begin{remark}
 For simplicity we have restricted ourselves to the above formal computations,
 but those computations will hold for the Galerkin approximations based on the eigenfunctions of~$A$,
 which means that they can be made rigorous. See, for example, \cite[Chapter~1, Section~6.4]{Lions69} and \cite[Chapter~3, Section~3]{Temam01} 
\end{remark}

\section{Considering repeated eigenvalues}\label{S:repeated}
In some cases it will be more convenient to work with the sequence $(\underline\lambda_k)_{k\in\N_0}$ of repeated eigenvalues.
In that case we have to adjust our assumptions to obtain the corresponding version of the Theorem~\ref{T:main}. Consider the
system of eigenfunctions $\{W_k\mid k\in\N_0\}$.

\begin{assumption}\label{A:Bnml_r}
There are real numbers $\alpha>0$ and $\beta\ge0$, such that for all triples $(n,\,m,\,l)\in\N_0^3$
\[
\begin{array}{l}
        (B(W_n+W_m),\,W_l)_H\ne 0,
       \end{array}
\quad\mbox{implies}\quad
\underline\lambda_l^\alpha\le \underline\lambda_n^\alpha+\underline\lambda_m^\alpha+\beta.
\]
\end{assumption}

For given $(n,\,m,\,l)\in\N_0^3$, we define the sets
\begin{align*}
 \underline\FF_{n,\,m}^\bullet&\coloneqq\left\{k\in\N_0\mid(B(W_n+W_m),\,W_k)_H\ne 0,\mbox{ with } n<m \right\};\\
 \underline\FF_{n,\,\bullet}^l&\coloneqq\left\{k\in\N_0\mid(B(W_n+W_k),\,W_l)_H\ne 0,\mbox{ with } n<k \right\}.
\end{align*}

\begin{assumption}\label{A:Fnml_r}
There are $C_\FF\in\N_0$ and $ \zeta \in[0,\,+\infty)$, such that for all $n \in \N_0$ we have
\[
\sup_{(m,l) \in \N_0^2} \left \{  {\rm card}(\underline\FF_{n,\,m}^\bullet),{\rm card}(\underline\FF_{n,\,\bullet}^l) \right \}   \le C_\FF\underline\lambda_n^{ \zeta }.
\]
\end{assumption}

\begin{theorem} \label{T:main_r}
 Suppose that the Assumptions~\ref{A:A}, \ref{A:B}, \ref{A:Bnml_r} and~\ref{A:Fnml_r} hold, and let the increasing
 sequence of (repeated) eigenvalues $(\underline\lambda_k)_{k\in\N_0}$ of the Stokes operator~$A$ satisfy, for some positive
 real numbers $\rho,\,\xi$,
 \begin{equation*}
  \underline\lambda_k>\rho k^\xi,\quad\mbox{for all }k\in\N_0.
 \end{equation*}
Further, let us be given $\alpha\in(0,\,1)$ as in Assumption~\ref{A:Bnml_r}, $C_\FF$ and $ \zeta \ge0$ as in
Assumption~\ref{A:Fnml_r}, $s>\frac{d+2(\xi^{-1}+  2\zeta  -1)}{4}$, $\sigma>0$,
$h\in L^{\infty}(\R_0,\,\D(A^{s-\frac{1}{2}}e^{\sigma A^{\alpha}}))$, and $u_0\in\D(A^s)$.\\
Then, there are $T^*>0$ and a unique solution
\begin{equation*}
 u \in L^{\infty} \left(  (0,T^*), D  \left( A^s e^{\sigma A^{\alpha}} \right) \right) 
\cap  L^{2} \left(  (0,T^*), D  \left( A^{s+\frac{1}{2}} e^{\sigma A^{\alpha}} \right) \right)
\end{equation*}
for the Navier Stokes system \eqref{sys_u}.\\
Further, $T^*$ depends on
the data
$\left(|h|_{L^{\infty} (\R_0,\, D \left( A^{s- \frac{1}{2}}e^{\sigma A^{\alpha}} \right)} ,\, \left| A^s u_0  \right|_H  \right)$
and also on the constants $\nu$, $d$, $s$, $\sigma$, $\alpha$, $\rho$, $\xi$, $ \zeta $, and $C_\FF$.
\end{theorem}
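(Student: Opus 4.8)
The plan is to reduce Theorem~\ref{T:main_r} to Theorem~\ref{T:main} by passing from the sequence $(\underline\lambda_k)$ of repeated eigenvalues to the sequence $(\lambda_k)$ of distinct eigenvalues, and then invoking the already-proven Theorem~\ref{T:main}. The key observation is that all the analysis in Section~\ref{S:gevrey} depends on the spectrum only through: (i) the growth condition $\underline\lambda_k>\rho k^\xi$; (ii) the ``frequency balance'' Assumption~\ref{A:Bnml} on the distinct eigenvalues; and (iii) the ``bounded cardinality'' Assumption~\ref{A:Fnml}. So the proof amounts to showing that Assumptions~\ref{A:Bnml_r} and~\ref{A:Fnml_r}, together with the growth condition on $(\underline\lambda_k)$, imply the hypotheses of Theorem~\ref{T:main} (possibly with different but admissible constants $\rho',\,\xi',\,\beta',\,C_\FF',\,\zeta'$).

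First I would observe that whenever $(B(u+v),\,w)_H\ne0$ for $u\in P_nH$, $v\in P_mH$, $w\in P_lH$, one can expand $u,\,v,\,w$ in the eigenbasis $\{W_k\}$ and find indices $n',\,m',\,l'$ with $\underline\lambda_{n'}=\lambda_n$, $\underline\lambda_{m'}=\lambda_m$, $\underline\lambda_{l'}=\lambda_l$ and $(B(W_{n'}+W_{m'}),\,W_{l'})_H\ne0$; Assumption~\ref{A:Bnml_r} then gives $\lambda_l^\alpha=\underline\lambda_{l'}^\alpha\le\underline\lambda_{n'}^\alpha+\underline\lambda_{m'}^\alpha+\beta=\lambda_n^\alpha+\lambda_m^\alpha+\beta$, which is exactly Assumption~\ref{A:Bnml}. (A small subtlety, as in Assumption~\ref{A:B}'s bilinearity, is that $(B(W_n+W_m),\,W_l)_H$ being nonzero need not imply both $b(W_n,W_m,W_l)$ and the mixed cross-terms are individually nonzero, but the relevant implication is triggered precisely by nonvanishing of the symmetrized bracket, so the passage is clean.) For the cardinality, $\FF_{n,m}^\bullet$ is the union of $\underline\FF_{n',m'}^\bullet$ over the (finitely many, say $\mu_n$ and $\mu_m$) basis indices with $\underline\lambda_{n'}=\lambda_n$, $\underline\lambda_{m'}=\lambda_m$; hence ${\rm card}(\FF_{n,m}^\bullet)\le \mu_n\mu_m\,C_\FF\,\max\underline\lambda^\zeta$. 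Here I need a bound on the multiplicity $\mu_n$ of $\lambda_n$: since the repeated eigenvalues satisfy $\underline\lambda_k>\rho k^\xi$, the number of $k$ with $\underline\lambda_k=\lambda_n$ is at most the number of $k$ with $\rho k^\xi<\lambda_n$, i.e. $\mu_n\le(\lambda_n/\rho)^{1/\xi}$, so $\mu_n\le C\lambda_n^{1/\xi}$. This yields ${\rm card}(\FF_{n,m}^\bullet)\le C\lambda_n^{1/\xi}\lambda_m^{1/\xi}\lambda_l^\zeta$ — but Assumption~\ref{A:Fnml} requires a bound purely in terms of $\lambda_n$. To get that I combine with Assumption~\ref{A:Bnml} (equivalently Lemma~\ref{L:lambda_s}): on the set $\FF_{n,m}^\bullet$ we have the constraint $\lambda_l^\alpha\le\lambda_n^\alpha+\lambda_m^\alpha+\beta$, and the index $m$ is itself constrained (it ranges over $\FF_{n,\bullet}^l$ which by symmetry equals the set where $n\in\FF_{m,\bullet}^l$), so a more careful bookkeeping is needed.

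Given that complication, the cleaner route — and the one I expect the authors take — is not to verify Assumptions~\ref{A:Bnml}/\ref{A:Fnml} abstractly but to re-run the proof of Theorem~\ref{T:main} directly with $(\underline\lambda_k)$ in place of $(\lambda_k)$, using $P_kH$ replaced by the one-dimensional spans $\R W_k$. Then the projections $P_k$ in the computation~\eqref{Bu_A^2s_ineq} become projections onto $\R W_k$, the crucial identity $b(u,u,v)=0$ for $u\in P_kH$ is replaced by its analogue $b(W_k,W_k,v)=0$ (which follows from the first bullet of Assumption~\ref{A:B} applied to the genuine eigenspace, since $W_k$ lies in it), and the sets $\FF_{n,m}^\bullet$, $\FF_{n,\bullet}^l$ are replaced by $\underline\FF_{n,m}^\bullet$, $\underline\FF_{n,\bullet}^l$. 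Every inequality in Lemma~\ref{lemmaBu} and in Section~\ref{sS:proofT:main} goes through verbatim: the Agmon inequality, the Cauchy–Schwarz splitting using $\underline\FF_{n,m}^\bullet\Leftrightarrow$ symmetric membership in $\underline\FF_{n,\bullet}^l$, Assumption~\ref{A:Fnml_r} in place of~\ref{A:Fnml}, the summability $\sum_n\underline\lambda_n^{d/2-2s+2\zeta-\gamma}<\infty$ from the growth condition, and the Young/Gronwall-type argument giving the local-in-time bound~\eqref{dAsu*2}. The uniqueness argument is identical (it only uses that $u$ is a weak solution plus Sobolev embeddings). The main obstacle — really the only place demanding care — is confirming that the first bullet of Assumption~\ref{A:B}, ``$b(u,u,v)=0$ if $u\in P_kH$,'' combined with the derivation leading to~\eqref{Bu_A^2s_ineq}, still kills the diagonal terms when we symmetrize over the one-dimensional decomposition; since $W_k\in P_kH$ this is immediate, so in fact the entire proof of Theorem~\ref{T:main} applies mutatis mutandis, and Theorem~\ref{T:main_r} follows. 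I would state this explicitly rather than reproduce the computation, noting that $T^*$ and its dependence on the data and on $\nu,\,d,\,s,\,\sigma,\,\alpha,\,\rho,\,\xi,\,\zeta,\,C_\FF$ come out exactly as before (with $\beta$ now absorbed since it appears only through $\ex^{\sigma\beta}$ in the constant).
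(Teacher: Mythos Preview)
Your proposal is correct, and your second route---re-running the proof of Theorem~\ref{T:main} line by line with the one-dimensional spans $\R W_k$ in place of the eigenspaces $P_kH$---is exactly what the paper does; its entire proof reads ``The proof can be done following line by line that of Theorem~\ref{T:main}.'' Your initial attempt to deduce Assumptions~\ref{A:Bnml}/\ref{A:Fnml} from the repeated-eigenvalue versions via multiplicity bounds is an unnecessary detour, and you rightly abandon it.
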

The proof can be done following line by line that of Theorem~\ref{T:main}.

\begin{remark}\label{R:linf_bound}
If we can find a bound $ \left| P_nu  \right|_{L^{\infty}   \left(\Omega,\,T\Omega \right) } 
\le C \lambda_n^{\theta} |P_nu|_H$  with $ \theta < \frac{d}{4} $ and $C$ independent of $n$, then we can take  $\theta$ in the place of $ \frac{d}{4}$ in \eqref{Bu_A^2s_ineq}. As a corollary, 
we can replace $d$ by $4\theta$ in Theorem~\ref{T:main}, provided $s$ satisfies $s \ge 0$
in the case $d=2$ and $s > \frac{1}{4}$ in the case $d=3$, in order to guarantee the uniqueness of the solution.
The analogous conclusion holds for Theorem~\ref{T:main_r}, if we can find a bound 
$ \left| W_n  \right|_{L^{\infty}\left(\Omega,\,T\Omega \right) } 
\le C \lambda_n^{\theta}$.
\end{remark}

\begin{remark}
In some situations like in the case of general Navier boundary conditions it may be useful to
split the Stokes operator~$\Pi\Delta$ as $\Pi\Delta=A+C$ (cf.~\cite[Chapter~4, Section~4.2]{Rod-Thesis08}),
or it may be interesting to consider an additional linear external forcing
(like a Coriolis forcing as in~\cite{CaoRammahaTiti99}). In these cases we will have the system
\[
 \dot u+B(u,\,u)+A u+Cu+h=0,\quad u(0,\,x)=u_0(x),
\]
instead of~\eqref{sys-u-evol}.
Notice that Theorems~\ref{T:main} and~\ref{T:main_r} will hold in these cases provided we have the estimate
$(Cu,\,A^{2s}u)_{V',\,V}\le C_1\norm{A^{s}u}{H}\norm{A^{s+\frac{1}{2}}u}{H}$. A better estimate
holds in the case of the two-dimensional Navier--Stokes equation under the action of a Coriolis force~$\tilde Cu$:
from~\cite[Lemma~1]{CaoRammahaTiti99}, for $s>\frac{1}{2}$ it holds
$(Cu,\,A^{2s}u)_{V',\,V}=(A^{s+\frac{1}{2}}Cu,\,A^{s-\frac{1}{2}}u)_{H}\le C_1\norm{A^s}{H}\norm{A^{s-\frac{1}{2}}}{H}$,
with $Cu\coloneqq\Pi\tilde Cu$.
\end{remark}

\section{Examples}\label{S:examples}
We start by revisiting the cases where $\Omega$ is the Torus~$\T^d$ and the Sphere~$\Sp^2$. Then we
give some new examples in two dimensions, namely the cases of Hemisphere, Rectangle and Cylinder under Lions boundary conditions.

\subsection{Torus} \label{sS:Torus}
We consider the torus $\T^d = \Pi_{i=1}^d\Sp^1\sim (0, 2 \pi]^d$, $d\in\{2,\,3\}$. This case corresponds to the case where we take
periodic boundary conditions in~$\R^d$ with period~$2 \pi$ in each direction $x_i$, $i\in\{1,\,\dots,\,d\}$.
We also assume that the average~$\int_{\T^2}u(t)\,\ed\T^d$ vanishes for (a.e.) $t\ge0$
(cf.~\cite[Chapter~II, eq.~(2.5)]{FoiManRosaTem01}, \cite[Section~2.1]{AgraSary05}).  
In this case the Navier--Stokes system can be rewritten as an evolutionary equation
in the space of divergence free and zero averaged vector
fields~$H=\{u\in L^2(\T^d,\,T\T^d)\sim L^2(\T^d,\,\R^d)\mid \diver u=0\mbox{ and }\int_{\T^2}u\,\ed\T^d=0\}$, with the spaces $V$ and
$\D(A)$, defined in Section~\ref{sS:evol_sys} given by~$V=H\cap H^1(\T^d,\,T\T^d)$ and~$\D(A)=H\cap H^2(\T^d,\,T\T^d)$.

We will show that in this case we can take $\alpha=\frac{1}{2}$,
$\xi=\frac{1}{2}$, and $\zeta=0$ in Theorem~\ref{T:main_r}, and $\theta=0$ in Remark~\ref{R:linf_bound}. That is, we can take~$s>0$,
in Theorem~\ref{T:main_r}.

To simplify the writing we will denote the usual Euclidean scalar product~$(u,\,v)_{\R^d}$ in~$\R^d$ by~$u\cdot v\coloneqq\sum_{i=1}^d u_iv_i$.
It is well known that a vector field can be written as
\[
 u=\sum_{\begin{subarray}{l} k\in\Z^d\setminus\{0_d\}\end{subarray}}u_k\ex^{\im k\cdot x},
\]
where $0_d$ stands for the zero element $(0,\,\dots,\,0)\in \R^d$, $\mathbf i\sim 0+1\mathbf i$ is the imaginary complex unit,
and the coefficients satisfy
$k\cdot u_k=0$ and $u_{-k}=\overline{u_k}$, where the overline stands for the complex conjugate.
The condition $k\cdot u_k=0$ comes from the divergence free condition,
and~$u_{-k}=\overline{u_k}$ comes from the fact that $u$ is a function with (real) values in $\R^3$.
Thus
\[
 u=\sum_{\begin{subarray}{l} k\in\Z^d;\;k>0_d\end{subarray}}\Re(u_k)\cos(k\cdot x)-\Im(u_k)\sin(k\cdot x),
\]
where $k>0_d$ is understood in the lexicographical order, that is either $k_1>0$, or $k_1=0 \mbox{ and } k_2>0$, or
$(k_1,\,k_{d-1})=(0,\,0) \mbox{ and } k_d>0$, and that a basis of vector fields in $H$ is given by
\[
 \WW=\{w_k^j\cos(k\cdot x),\,w_k^j\sin(k\cdot x)\mid k\in\Z^d,\,k>0_d\mbox{ and }j\in\{1,\,d-1\}\}
\]
where for each $k\in\Z^d,\,k>0_d$, $\{w_k^1,\,w_k^{d-1}\}$
is a basis for the orthogonal space $\{k\}^\bot$ of $\{k\}$, in $\R^d$. That is, $\linspan\{w_k^1,\,w_k^{d-1}\}=\{k\}^\bot$
(cf.~\cite[Chapter~6, Section~1]{Rod-Thesis08} for the case $d=2$). Moreover we may choose the vectors~$w_k^j$ so
that the basis above is orthonormal, that is, we can write
\[
 u=\sum_{\begin{subarray}{l} k\in\Z^d;\;k>0_d\\ j\in\{1,d-1\}\end{subarray}}u_{k,j}^c w_k^j\cos(k\cdot x)+u_{k,j}^s w_k^j\sin(k\cdot x).
\]
Since the cardinality of $\{k\in\Z^d\mid k>0_d\}\times\{1,\,d-1\}$ is equal to that of $\N_0$ we could write the previous sum as
$u=\sum_{k\in\N_0}u_kW_k$, as in the preceding text (cf. Section~\ref{sS:gevrey}). However we can check the Assumptions~\ref{A:A}, \ref{A:B},
\ref{A:Bnml_r}, and~\ref{A:Fnml_r} without performing that writing explicitly.

\noindent
{\bf Checking Assumptions~\ref{A:A} and~\ref{A:B}.} Assumptions~\ref{A:A} is well known to hold under
periodic boundary conditions. The same holds for all the points in Assumption~\ref{A:B} (cf.~\cite[Section~2.3]{Temam95});
we only check the first one, or taking into account
Remark~\ref{R:linf_bound}, we check that $B\left(w_n^j\cos(n \cdot x),\,w_n^j\cos(n \cdot x)\right) =B\left(w_n^j\sin(n \cdot x),\,w_n^j\sin(n \cdot x)\right)=0$.
Indeed from $w_n^j\cdot n=0$, it follows
\begin{align*}
 B(w_n^j\cos(n \cdot x),\,w_n^j\cos(n \cdot x))&=-\Pi(w_n^j\cdot n)\cos(n \cdot x)\sin(n \cdot x)w_n^j=0,\\
 B(w_n^j\sin(n \cdot x),\,w_m^j\sin(n \cdot x))&=\Pi(w_n^j\cdot n)\sin(n \cdot x)\cos(n \cdot x)w_n^j=0,
\end{align*}
where $\Pi$ stands for the orthogonal projection in $L^2(\T^d,\,T\T^d)$ onto $H$.

\noindent
{\bf Checking Assumptions~\ref{A:Bnml_r} and~\ref{A:Fnml_r}.} We proceed as follows: first we obtain 
\begin{align*}
 B(w_n^j\cos(n \cdot x),\,w_m^i\cos(m \cdot x))&=-\Pi(w_n^j\cdot m)\cos(n \cdot x)\sin(m \cdot x)w_m^i,\\
 B(w_n^j\cos(n \cdot x),\,w_m^i\sin(m \cdot x))&=\Pi(w_n^j\cdot m)\cos(n \cdot x)\cos(m \cdot x)w_m^i,\\
 B(w_n^j\sin(n \cdot x),\,w_m^i\cos(m \cdot x))&=-\Pi(w_n^j\cdot m)\sin(n \cdot x)\sin(m \cdot x)w_m^i,\\
 B(w_n^j\sin(n \cdot x),\,w_m^i\sin(m \cdot x))&=\Pi(w_n^j\cdot m)\sin(n \cdot x)\cos(m \cdot x)w_m^i,
\end{align*}
from which we can find that
\begin{align*}
&\quad\; B(w_n^j\cos(n \cdot x) + w_m^i\cos(m \cdot x))\\ &= B(w_n^j\cos(n \cdot x),\,w_m^i\cos(m \cdot x))
+B(w_m^i\cos(m \cdot x),\, w_n^j\cos(n \cdot x))\\
&\quad+B(w_n^j\cos(n \cdot x),\,w_n^j\cos(n \cdot x))+B(w_m^i\cos(m \cdot x),\,w_m^i\cos(m \cdot x))\\
&=-\Pi w_m^i(w_n^j\cdot m)\cos(n \cdot x)\sin(m \cdot x) -\Pi w_n^j(w_m^j\cdot n)\cos(m \cdot x)\sin(n \cdot x)\\
&=\frac{1}{2}\Pi(-w_m^i(w_n^j\cdot m)-w_n^j(w_m^j\cdot n))\sin((m+n)\cdot x)\\
&\quad+\frac{1}{2}\Pi(-w_m^i(w_n^j\cdot m)+w_n^j(w_m^j\cdot n))\sin((m-n)\cdot x),
\end{align*}
then, it is straightforward to check that $B(w_n^j\cos(n \cdot x) + w_m^i\cos(m \cdot x))$ is orthogonal in $L^2(\T^d,\,T\T^d)$
to all the elements in $\WW$
except those in $$\{w_{m+n}^j\sin\left((m+n) \cdot x \right),\,w_{[m-n]}^j\sin\left(([m-n]) \cdot x\right)\mid j\in\{1,\,d-1\}\},$$
where we denote
\[
[m-n]=\left\{
\begin{array}{ll}m-n&\mbox{if }m-n>0_d\\ n-m&\mbox{if }n-m>0_d\mbox{ or }n-m=0_d\end{array}\right..
\]
In other words, we can conclude that $(B (w_n^j\cos(n \cdot x) + w_m^i\cos(m \cdot x)),\,v)_H\ne0$ only if
$$v\in\linspan\{w_{m+n}^j\sin\left((m+n) \cdot x \right),\,w_{[m-n]}^j\sin\left(([m-n]) \cdot x\right)\mid j\in\{1,\,d-1\}\}.$$
Analogously, we can conclude that $(B(w_n^j\sin(n \cdot x) + w_m^i\sin(m \cdot x)),\,v)_H\ne0$ only if
$$v\in\linspan\{w_{m+n}^j\sin\left((m+n) \cdot x \right),\,w_{[m-n]}^j\sin\left(([m-n]) \cdot x\right)\mid j\in\{1,\,d-1\}\}.$$
Besides that $(B(w_n^j\sin(n \cdot x) + w_m^i\cos(m \cdot x)),\,v)_H\ne0$ only if
$$v\in\linspan\{w_{m+n}^j\cos \left((m+n) \cdot x \right),\,w_{[m-n]}^j\cos\left(([m-n]) \cdot x\right)\mid j\in\{1,\,d-1\}\};$$ 
and that $(B(w_n^j\cos(n \cdot x) + w_m^i\sin(m \cdot x)),\,v)_H\ne0$ only if
$$v\in\linspan\{w_{m+n}^j\cos\left((m+n) \cdot x \right),\,w_{[m-n]}^j\cos\left(([m-n]) \cdot x\right)\mid j\in\{1,\,d-1\}\}.$$
Therefore we can conclude that ${\rm card}(\underline\FF_{n,\,m}^\bullet)\le4$ and that
necessarily ${\rm card}(\underline\FF_{n,\,\bullet}^l)\le4$. That is, we can take $C_\FF=4$ and $\zeta=0$ in Assumption~\ref{A:Fnml_r}.

Assumption~\ref{A:Bnml_r} follows from the fact that the eigenvalue associated
to $w_n^j\sin(n \cdot x)$ and $w_n^j\cos(n \cdot x)$,
is given by $\norm{n}{\R^d}^2=n\cdot n$, and
by the triangle inequality,   
$\norm{n\pm m}{\R^d}\le\norm{n}{\R^d}+\norm{m}{\R^d}$,
which implies that Assumption~\ref{A:Bnml_r} holds with
$\alpha=\frac{1}{2}$ and $\beta=0$.

\noindent
{\bf Looking for the value~$\theta$ in Remark~\ref{R:linf_bound}.}
From $|w_n^j\sin(n \cdot x)|_{L^\infty(\T^d,\,T\T^d)}\le|w_n^j|$ and
$|w_n^j\sin(n \cdot x)|_{L^2(\T^d,\,T\T^d)}=1$, we have
$|w_n^j|^2=|\sin(n \cdot x)|_{L^2(\T^d,\,T\T^d)}^{-2}=\pi^{-d}$ and
$|w_n^j\sin(n \cdot x)|_{L^\infty(\T^d,\,T\T^d)} \le \pi^{-\frac{d}{2}}$, and similarly 
$|w_n^j\cos(n \cdot x)|_{L^\infty(\T^d,\,T\T^d)} \le \pi^{-\frac{d}{2}}$.
Hence, we can take $\theta=0$ in Remark~\ref{R:linf_bound}.

\noindent
{\bf Asymptotic behavior of the (repeated) eigenvalues.}
From~\cite[Chapter~II, Section~6]{FoiManRosaTem01}
we know that the asymptotic behavior of the (repeated) eigenvalues of the Stokes operator in $\T^d$
satisfy $\lambda_k\sim \lambda_1k^\frac{2}{d}$ and more precisely
\[
\lim_{k\to+\infty}\frac{\lambda_k}{\lambda_1k^\frac{2}{d}}\eqqcolon \zeta> 0;
\]
then in particular there is $k_0\in\N_0$ such that $\frac{\lambda_k}{\lambda_1k^\frac{2}{d}}\ge\frac{\zeta}{2}$ for all $k>k_0$,
which implies that for all $k\in\N_0$ we have $\lambda_k>\rho k^{\frac{2}{d}}$ if
$\rho<\lambda_1\min\{\frac{\zeta}{2},\,\zeta_0\}$, with $\zeta_0\coloneqq\min_{k\le k_0}\frac{\lambda_k}{\lambda_1k^\frac{2}{d}}$.
That is, we can take $\xi=\frac{2}{d}$ in Theorem~\ref{T:main_r}.

\noindent
{\bf Conclusion.}
Taking into account Remark~\ref{R:linf_bound}, we conclude that Theorem~\ref{T:main_r} holds
with~$\alpha=\frac{1}{2}$ and~$s>\frac{d-2}{4}$. This improves the results in~\cite{FoiasTemam89,Liu92},
from whose we already knew that $s$ could be taken
in $[\frac{1}{2},\,+\infty)$ for $d=2$, and in $\{\frac{1}{2}\}\cup[\frac{3}{4},\,+\infty)$ for $d=3$.

\subsection{Sphere} \label{sS:Sphere}
Let $\Omega = \mathbb S^2\coloneqq\{(x_1,\,x_2,\,x_3)\in\R^3\mid x_1^2+x_2^2+x_3^2=1\}$ be
the two-dimensional sphere with the Riemannian metric induced from the usual Euclidean metric in $\R^3$.

In this case we can write the Navier--Stokes system as an evolutionary equation in the
space~$H \coloneqq \{u \in L^2 (\Omega,\,T \Omega) \mid \nabla \cdot u =0  \} \cap \{\nabla^\bot\psi\mid\psi\in H^1(\Sp^2,\,\R)\}$, with
$V \coloneqq H\cap H^1 (\Omega,\,T\Omega)$ and $\D(A) \coloneqq H\cap H^2 (\Omega,\,T \Omega)$ (cf.~\cite[Section~5.6]{Rod-Thesis08},
\cite[Section~2]{CaoRammahaTiti99}).

\begin{remark}\label{R:culr-def}
Notice that in~\cite[Section~2]{CaoRammahaTiti99} and~\cite[Section~5.6]{Rod-Thesis08} the definitions and notations of the curl of a
function~$f$ are different;
in the former reference it is denoted~$\mathop{\rm Curl}f$ and in the latter~$\nabla^\perp f$;
we shall show that $\mathop{\rm Curl}f=-\nabla^\perp f$ in the Appendix, Section~\ref{sApx:curl}.
\end{remark}

In this case we will use Theorem~\ref{T:main} and Remark~\ref{R:linf_bound}, and show that there we can take $\theta = \frac{1}{4}$,
$\xi = 2$, $\zeta = \frac{1}{2}$, $\alpha=\frac{1}{2}$
and $s > \frac{1}{2}$. In particular we recover the result in~\cite{CaoRammahaTiti99}.

The complete system of eigenfunctions and respective eigenvalues for $A=-\nu\Pi\Delta$, in $H$, is presented
in \cite[Section~2]{CaoRammahaTiti99}, and it is given by
\begin{equation}\label{completesys-Sp}
 \{Z^m_n (\vartheta, \phi)
=\lambda_n^{-\frac{1}{2}} \nabla^{\perp} Y^m_n (\vartheta, \phi)\mid n\in\N_0, m\in\Z,\mbox{ and }|m|\le n\}
\end{equation}
where $\vartheta \in (0, \pi)$, $\phi \in (0, 2\pi)$,
and  for each $Y^m_n (\vartheta, \phi) \coloneqq  C^m_n e^{im \phi}P^m_n(\cos \vartheta)$ is
a normalized eigenfunctions of the Laplacian in $L^2(\Sp^2,\,\R)$ associated with the eigenvalue $\lambda_n = n(n+1)$, with
$C^m_n \coloneqq \left( \frac{2n+1}{4 \pi} \frac{(n - |m|)!} {(n - |m|)!}  \right)^{\frac{1}{2}}$ and $P^m_n$ is the
Ferrers' associated Legendre function of the first kind 
\begin{equation}\label{assocLegfunt}
P_n^m (x) \coloneqq \frac{\left( 1 - x^2 \right)^{\frac{m}{2}}}{2^n n!} \frac{\ed^{n+m}(x^2-1)^n}{\ed x^{n+m}} ,
\quad P_n^{-m}(x) \coloneqq P_n^{m}(x)
\end{equation}
for $m \in \{ k \in \N: k \le n \}$, defined for $|x| \le 1$. For further details on these functions we refer
to~\cite[Chapter~XV, Section~15.5]{WhittakerWatson69}.

For any $(u,v,w) \in P_nH \times P_mH \times P_lH$, there are scalar functions $(\psi_u, \psi_v, \psi_w)$
called  stream functions associated with $(u,v,w)$ respectively such that 
\[
u = - \nabla^{\perp} \psi_u,\quad
v = - \nabla^{\perp} \psi_v \quad
w = - \nabla^{\perp} \psi_w
\]
where 
\[
\psi_u = \sum_{|i|\le n }{\psi^i_u Y_n^i}, \quad
\psi_v= \sum_{|j|\le m }{\psi_v^j Y_m^j}, \quad
\psi_w = \sum_{|k|\le l}{\psi_w^k Y_l^k},
\]
and  $\psi_u^i =\overline{\psi_u^{-i}} $, $\psi_v^j =\overline{\psi_v^{-j}} $, and $\psi_w^k =\overline{\psi_w^{-k}} $.

\noindent
{\bf Checking Assumptions~\ref{A:A} and~\ref{A:B}.} Assumption~\ref{A:A} and the estimates in Assumption~\ref{A:B} follow straightforward.
Now we show that $B(u)=0$ if $u$ is an eigenfunction.
From the discussion after Corollary~5.6.3 in~\cite[Chapter~5, Section~5.6]{Rod-Thesis08} we
have that $\nabla^\perp\cdot (B(u))=g(\nabla\Delta^{-1}\nabla^\perp\cdot u,\,\nabla^\perp\nabla^\perp\cdot u)$, where~$g(\cdot,\,\cdot)$
is the scalar product
in~$T\Sp^2$ inherited from the Euclidean scalar product in~$\R^3$ and $\Delta^{-1}f$
denotes the solution $g$ of the Poisson system $\Delta g=f,\; g\rest{\p\Omega}=0$. If $u$ is an eigenfunction from~\eqref{completesys-Sp} with
associated eigenvalue~$\lambda_u$ and associated stream function~$\psi_u$, then $\nabla^\perp\cdot u=\Delta \psi_u$, and we
find~$\nabla^\perp\cdot B(u)
=g(\nabla \psi_u,\,\nabla^\perp\Delta \psi_u)=\lambda_u g(\nabla \psi_u,\,\nabla^\perp \psi_u)=0$, this implies that
$AB(u)=\Delta B(u)=\nabla^\perp\nabla^\perp\cdot (B(u))=0$, and necessarily $B(u)\in H$ is orthogonal to all eigenfunctions
in~\eqref{completesys-Sp}, $(B(u),Z_n^m)_H=\lambda_n^{-1}(B(u),AZ_n^m)_H=\lambda_n^{-1}(AB(u),Z_n^m)_H=0$, that is, $B(u)=0$.

Finally, for the skew-symmetry property $b(u,\,v,\,w)=-b(u,\,w,\,v)$ we refer to~\cite[Section~8, Equation~(59)]{Arnold66}
\cite[Chapter~5, Corollary~5.5.2]{Rod-Thesis08}.

\noindent
{\bf Checking the Assumptions~\ref{A:Bnml} and~\ref{A:Fnml}.}
Following~\cite[Section~3, Lemma~2]{CaoRammahaTiti99} (cf.~\cite[Chapter~5, corollary 5.6.3]{Rod-Thesis08}
\cite[Section~9, Equation~(90)]{Arnold66}), for eigenfunctions $u\in P_nH$, $v\in P_mH$, and $w\in P_lH$ we obtain 
\begin{align*}
 \left| \left( B(u+v), w \right)_H \right|  &=
 \left|\left( \Pi \left(  \Delta{\psi_v} \nabla \psi_u   \right),   \nabla^{\perp} \psi_w \right)_H 
 +   \left( \Pi \left(  \Delta{\psi_u} \nabla \psi_v   \right),   \nabla^{\perp} \psi_w\right)_H\right|   \\
&= \left| \left( \Pi \left(  \sum_{|j| \le m}{ \psi_v^j \Delta{Y^j_m}}  \sum_{|i| \le n}{ \psi_u^i \nabla Y^i_n  }    \right),  \sum_{|k|\le l}{ \psi_w^l \nabla^{\perp} Y_l^k }   \right)_H  \right.   \\
 &  \left.\qquad + \left( \Pi \left(  \sum_{|j| \le m}{ \psi_v^j \Delta{Y^j_m}}  \sum_{|i| \le n}{ \psi_u^i \nabla Y^i_n  }    \right),  \sum_{|k|\le l}{ \psi_w^l \nabla^{\perp} Y_l^k }   \right)_H      \right|\\
&= \left |  \sum_{|i| \le n}  \sum_{|j| \le m} \sum_{|k| \le l}    \psi_u^i \psi_v^j \psi_w^l \left( \Delta Y^j_m \nabla Y^i_n, \nabla^{\perp} Y^k_l   \right)_H    \right.  \\
&\left.\qquad+  \sum_{|i| \le n}  \sum_{|j| \le m} \sum_{|k| \le l}   \psi_u^i \psi_v^j \psi_w^l  \left( \Delta Y^j_m \nabla Y^i_n, \nabla^{\perp} Y^k_l   \right)_H    \right|
\end{align*}  

An explicit expression for the scalar product $\left( \Delta Y^j_m \nabla Y^i_n, \nabla^{\perp} Y^k_l   \right)_H $ is given
in~\cite[Theorem 5.3]{FenglerFreeden05}, that expression involves the so-called Wigner-3j
symbols. For this symbols we refer also to
~\cite[Section~3.7]{Edmonds96} and~\cite[Section~2]{RaschYu04}.
From that expession in~\cite[Theorem 5.3]{FenglerFreeden05}, recalling that the
Wigner-3j symbol $\left ( \begin{array}{ccc}  j_1 & j_2 &j_3 \\ m_1 & m_2 & m_3   \end{array} \right)$ vanishes unless all the conditions
\begin{enumerate}
\item[i.] $m_1 + m_2 + m_3 = 0$,
\item[ii.] $j_1 + j_2 + j_3$ is an integer (if $m_1 = m_2 = m_3 = 0$, $j_1 + j_2 + j_3$ is an even integer), 
\item[iii.] $|m_k| \le j_k$, and
\item[iv.] $|j_1 - j_2| \le j_3 \le j_1 + j_2$
\end{enumerate}  
are satisfied, we can conclude that $(u,v,w) \in P_nH \times P_mH \times P_lH$ and
$\left(B(u+v), w\right)_H \ne 0$ only if $m -n \le l < m+n$ and $m+n+l$ is odd
(cf.~\cite[Corollary~5.4]{FenglerFreeden05}).

Therefore, we obtain that necessarily ${\rm card}(\FF_{n,\,m}^\bullet)\le 2n$ and ${\rm card}(\FF_{n,\,\bullet}^l)\le 2n$, that is,
Assumption~\ref{A:Fnml} holds for $C_{\FF} = 2$ and $ \zeta  = \frac{1}{2}$.

For $(u,v,w) \in P_nH \times P_mH \times P_lH$ and $\left(B(u+v), w\right)_H \ne 0$ we have $l \in \left[ m-n, m+n \right]$, then
$\lambda_{l} < \lambda_{n+m}$, and from Lemma~\ref{L:triangleK} and~\eqref{polyK} in the Appendix (setting~$p(x)=x(x+1)$), we have that
$\lambda_{m+n}^{\frac{1}{2}} \le 
\lambda_{m}^{\frac{1}{2}} + \lambda_{n}^{\frac{1}{2}} +2$, and it follows that Assumption~\ref{A:Bnml} holds with
$\alpha=\frac{1}{2}$.

\noindent
{\bf The parameters~$\theta$ and~$\xi$.}
From~\cite[Section~3, Lemma~2]{CaoRammahaTiti99}, we can take $\theta=\frac{1}{4}$ in~\ref{R:linf_bound}, and
from~$\lambda_k = k(k+1) > k^2$ it follows that~\eqref{lambda_k} holds with~$\xi=2$.  

\noindent
{\bf Conclusion.}
Taking into account Remark~\ref{R:linf_bound}, we conclude that Theorem~\ref{T:main} holds
with~$\alpha=\frac{1}{2}$ and~$s>\frac{1}{2}$. This agrees with the results in~\cite{CaoRammahaTiti99}.


\subsection{Hemisphere} \label{Hesmisphere} 
Let $\Omega$ be the Hemisphere $\Sp^2_+ = \left \{ (x_1, x_2, x_3) \in \Sp^2 \mid x_3 >0 \right\}$.
On the boundary $\p\Sp^2_+$ of $\Sp^2_+$, $\p\Sp^2_+ = \left \{ (x_1, x_2, x_3) \in \Sp^2 \mid x_3 =0 \right\}$, we impose the Lions boundary conditions, that is,
we consider the evolutionary Navier--Stokes equation
in~$H \coloneqq \{u \in L^2 (\Omega,\,T \Omega) \mid \nabla \cdot u =0\mbox{ and }g(u,\,\nnn)=0\}
\cap \{\nabla^\bot\psi\mid\psi\in H^1(\Sp^2_+,\,\R)\}$, with
$V \coloneqq H\cap H^1 (\Omega,\,T \Omega)$ and
$\D(A) \coloneqq V\cap\{u\in H^2 (\Omega,\,T \Omega)\mid\nabla^\bot\cdot u=0\mbox{ on }\p\Sp^2_+\}$
(cf.~\cite[Sections~5.5 and~6.4]{Rod-Thesis08}).

In this case we will use Theorem~\ref{T:main} and Remark~\ref{R:linf_bound}, and show that as in the case of the Sphere, in
Section~\ref{sS:Sphere},
there we can take $\theta = \frac{1}{4}$,
$\xi = 2$, $\zeta = \frac{1}{2}$, $\alpha=\frac{1}{2}$
and $s > \frac{1}{2}$.

In spherical coordinates~$\Sp^2\sim(\vartheta, \phi)\in[0,\,\pi]\times[0,\,2\pi)$
the Hemisphere corresponds to $\Sp^2_+\sim(\vartheta, \phi)\in[0,\,\frac{\pi}{2})\times[0,\,2\pi)$.
It turns out that from the system~\eqref{completesys-Sp} we can construct a complete system in $H$ formed by eigenfunctions of $A$, it is
\begin{equation}\label{completesys-hemiSp}
\left\{Z^m_n (\vartheta, \phi)\rest{\vartheta\in[0,\,\frac{\pi}{2})}
=\lambda_n^{-\frac{1}{2}} \nabla^{\perp} Y^m_n (\vartheta, \phi)\rest{\vartheta\in[0,\,\frac{\pi}{2})}\left|
\begin{array}{l}n\in\N_0,\; m\in\Z,\\|m|\le n,\;|m|+n\mbox{ is odd}\end{array}\right.\right\}
\end{equation}
(cf.~\cite[Proposition~6.4.2]{Rod-Thesis08}). Let us show that the system is complete.
For~$Z^m_n (\vartheta, \phi)$ in~\eqref{completesys-Sp} we have that
$\nabla^\perp\cdot Z^m_n (\vartheta, \phi)
=\lambda_n^{-\frac{1}{2}} \Delta Y^m_n (\vartheta, \phi)=\lambda_n^{\frac{1}{2}} Y^m_n (\vartheta, \phi)$, and if
$|m|+n$ is odd we have that $Y^m_n (\frac{\pi}{2}, \phi)=0$, that is, $Z^m_n (\vartheta, \phi)\rest{\vartheta\in[0,\,\frac{\pi}{2})}$
is in~$\D(A)$. Further we have that for $\vartheta_1\in[0,\,\frac{\pi}{2}]$,
\[
\begin{array}{rcll}
Z^m_n (\frac{\pi}{2}-\vartheta_1,\, \phi)&=&Z^m_n (\frac{\pi}{2}+\vartheta_1,\, \phi),&\quad\mbox{if }|m|+n\mbox{ is odd};\\
Z^m_n (\frac{\pi}{2}-\vartheta_1,\, \phi)&=&-Z^m_n (\frac{\pi}{2}+\vartheta_1,\, \phi),&\quad\mbox{if }|m|+n\mbox{ is even}.
\end{array}
\]
Notice that from~\eqref{assocLegfunt}, we can see that $P^m_n(-x)=-P^m_n(x)$ if $|m|+n$ is odd, and $P^m_n(-x)=P^m_n(x)$ if $|m|+n$ is even.

To show that~\eqref{completesys-hemiSp} is complete in~$H$, it is sufficient to show that the family of stream functions
$\{Y^m_n (\vartheta, \phi)\rest{\vartheta\in[0,\,\frac{\pi}{2})}
\mid n\in\N_0,\; m\in\Z,\;|m|\le n,\mbox{ and }|m|+n\mbox{ is odd}\}$ form a complete system in~$L^2(\Sp^2_+,\,\R)$.
Let $g(\vartheta,\,\phi)$ be a function defined on the Hemisphere $[0,\,\frac{\pi}{2})\times[0,\,2\pi)$;
we extend it to a function $\tilde{g}$ defined on the Sphere
as follows
\begin{align*}
\tilde{g}(\vartheta,\,\phi) = \left \{\begin{array}{lll}
g(\vartheta,\,\phi) & \text{if} &\vartheta\in[0,\,\frac{\pi}{2}), \\ 
-g(\pi-\vartheta,\,\phi) & \text{if} &\vartheta\in(\frac{\pi}{2},\,\pi].
\end{array} \right.
\end{align*}
We know that we can write
$\tilde{g} = \sum_{(n,m) \in \mathcal{S}}{(\tilde{g}, Y^m_n)_{L^2(\Sp^2,\,\R)} { Y^m_n}}$ where
$\mathcal{S}\coloneqq\{(n,\,m)\in\Z^2\mid n\in\N\mbox{ and }|m|\le n\}$.

By using spherical coordinates, we find for even $|m|+n$ 
\begin{align*}
&\quad\int_{\frac{\pi}{2}}^{\pi} \tilde g(\vartheta, \phi)  Y^m_n(\vartheta,\phi) \sin (\vartheta) \ed\vartheta 
=\int_{0}^{\frac{\pi}{2}} \tilde g(\textstyle\frac{\pi}{2}+\vartheta_1, \phi)
Y^m_n(\textstyle\frac{\pi}{2}+\vartheta_1,\phi) \sin(\frac{\pi}{2}+ \vartheta_1) \ed\vartheta_1 \\
&=\int_{0}^{\frac{\pi}{2}} -g(\textstyle\frac{\pi}{2}-\vartheta_1, \phi) 
Y^m_n(\textstyle\frac{\pi}{2}-\vartheta_1,\phi) \sin(\textstyle\frac{\pi}{2}-\vartheta_1) \ed\vartheta_1\\ 
&=-\int_{\frac{\pi}{2}}^{0} g(\vartheta_2, \phi) 
Y^m_n(\vartheta_2,\phi) \sin(\vartheta_2) (-\ed\vartheta_2) 
=-\int_{0}^{\frac{\pi}{2}} g(\vartheta_2, \phi)  Y^m_n(\vartheta_2,\phi) \sin (\vartheta_2) \ed\vartheta_2,
\end{align*}
which implies $\int_{0}^{\pi} g(\vartheta, \phi)  Y^m_n(\vartheta,\phi) \sin (\vartheta) \ed\vartheta =0$. Hence, for even $|m|+n$,
it follows $(\tilde g, Y_n^m)_{L^2(\Sp^2,\,\R)} 
= \int_{0}^{2\pi}d\phi{\int_{0}^{\pi}{\tilde g(\vartheta, \phi)  Y^m_n(\vartheta,\phi) \sin (\vartheta) \ed\vartheta }}=0$, that is,
$\tilde{g} = \sum_{(n,m) \in \sS_+}{(\tilde{g}, Y^m_n)_{L^2(\Sp^2,\,\R)} { Y^m_n}}$,
with $\sS_+\coloneqq\{(n,m) \in \mathcal{S}\mid|m|+n\mbox{ is odd}\}$, and
\[
g = \tilde{g}\rest{\vartheta\in[0,\,\frac{\pi}{2})}
=\sum_{(n,m) \in \sS_+} (\tilde{g}, Y^m_n)_{L^2(\Sp^2,\,\R)} Y^m_n\rest{\vartheta\in[0,\,\frac{\pi}{2})}, 
\]
which shows that the
set~$\{Y^m_n (\vartheta, \phi)\rest{\vartheta\in[0,\,\frac{\pi}{2})}\mid (n,\,m)\in\sS\}$ is complete in~$L^2(\Sp^2_+,\,\R)$.

Procceding as above for the extension~$\tilde g$ and for odd $|m|+n$ we have 
$(\tilde g, Y_n^m)_{L^2(\Sp^2,\,\R)}=2(g, Y_n^m\rest{\vartheta\in[0,\,\frac{\pi}{2})})_{L^2(\Sp^2_+,\,\R)}$, and also
$Y_n^m=\tilde h$ with $h=Y_n^m\rest{\vartheta\in[0,\,\frac{\pi}{2})}$. In particular we conclude that the family
$\{Y^m_n (\vartheta, \phi)\rest{\vartheta\in[0,\,\frac{\pi}{2})}\mid (n,\,m)\in\sS\}$ is orthogonal in~$L^2(\Sp^2_+,\,\R)$ and then it forms a
basis in~$L^2(\Sp^2_+,\,\R)$.

As a consequence we can conclude that the family~\eqref{completesys-hemiSp} form a complete system in~$H$. Notice that for $n=0$, $Y_0^0$
is a constant function, and the vector field~$\nabla^\perp Y_0^0\in L^2(\Sp^2,\,T\Sp^2)$ vanishes. From the fact that
$(Y_n^m, Y_n^m)_{L^2(\Sp^2,\,\R)}=2(Y_n^m\rest{\vartheta\in[0,\,\frac{\pi}{2})}, Y_n^m\rest{\vartheta\in[0,\,\frac{\pi}{2})})_{L^2(\Sp^2_+,\,\R)}$,
we can normalize that system as
\begin{equation}\label{completesys-hemiSp-n}
\left\{\sqrt2 Z^m_n (\vartheta, \phi)\rest{\vartheta\in[0,\,\frac{\pi}{2})}
=\lambda_n^{-\frac{1}{2}} \nabla^{\perp} Y^m_n (\vartheta, \phi)\rest{\vartheta\in[0,\,\frac{\pi}{2})}\left|
\begin{array}{l}n\in\N_0,\; m\in\Z,\\|m|\le n,\;|m|+n\mbox{ is odd}\end{array}\right.\right\}
\end{equation}

%

\noindent
{\bf Conclusion.} We can follow the arguments in the case of the Sphere, in Section~\ref{sS:Sphere},
to conclude that Theorem~\ref{T:main} holds with~$\alpha=\frac{1}{2}$ and~$s>\frac{1}{2}$.


\subsection{Rectangle} \label{sS:Rectangle}
Let $\Omega$ be the two-dimensional Rectangle $\Omega = (0, a) \times (0,b)\subset\R^2$.
On the boundary we impose the Lions boundary conditions, that is,
we consider the evolutionary Navier--Stokes equation
in~$H \coloneqq \{u \in L^2 (\Omega,\,\R^2) \mid \nabla \cdot u =0\mbox{ and } u\cdot\nnn=0\mbox{ on }\p\Omega\}$, with
$V \coloneqq H\cap H^1 (\Omega,\,\R^2)$ and
$\D(A) \coloneqq V\cap\{u\in H^2 (\Omega,\,\R^2)\mid\nabla^\bot\cdot u=0\mbox{ on }\p\Omega\}$
(cf.~\cite{Rod06} and~\cite[Section~6.3]{Rod-Thesis08}).

We will show that in this case we can take $\alpha=\frac{1}{2}$,
$\xi=\frac{1}{2}$, and $\zeta=0$ in Theorem~\ref{T:main_r}, and $\theta=0$ in Remark~\ref{R:linf_bound}. That is, we can take~$s>0$,
in Theorem~\ref{T:main_r}.

The complete system of eigenfunctions $\{Y_{(k_1,\,k_2)}\mid (k_1,\,k_2)\in\N_0^2\}$ and respective eigenvalues
$\{\lambda_{(k_1,\,k_2)}\mid (k_1,\,k_2)\in\N_0^2\}$ of $A$, can be found in~\cite[Sections~2.2 and 2.3]{Rod06}, they are given by
\begin{equation}\label{compsys-rect}
Y_{(k_1,k_2)}\coloneqq\left({\begin{array}{r}
-\frac{k_2 \pi}{b} \sin \left( \frac{k_1 \pi x_1}{a} \right) \cos \left( \frac{k_2 \pi x_2}{b}  \right)\\
\frac{k_1 \pi}{a} \cos \left( \frac{k_1 \pi x_1}{a} \right) \sin \left( \frac{k_2 \pi x_2}{b}  \right)
\end{array} }
\right),\quad \lambda_{(k_1,\,k_2)}\coloneqq\pi^2 \left( \textstyle\frac{k_1^2}{a^2} + \textstyle\frac{k_2^2}{b^2} \right).
\end{equation}

Though, the above systems are indexed over~$\N_0^2$, like in Section~\ref{sS:Torus},
we can check the Assumptions~\ref{A:A}, \ref{A:B},
\ref{A:Bnml_r}, and~\ref{A:Fnml_r} without rewriting the families as indexed over $\N_0$.

We may normalize the family~\eqref{compsys-rect}, obtaining the system
$\{ W_{(k_1,k_2)} \mid (k_1,\,k_2)\in\N_0^2\}$, with
\begin{equation}\label{compsys-rect-n}
W_{(k_1,k_2)}\coloneqq2(ab\lambda_{(k_1,\,k_2)})^{-\frac{1}{2}}\left({\begin{array}{r}
-\frac{k_2 \pi}{b} \sin \left( \frac{k_1 \pi x_1}{a} \right) \cos \left( \frac{k_2 \pi x_2}{b}  \right)\\
\frac{k_1 \pi}{a} \cos \left( \frac{k_1 \pi x_1}{a} \right) \sin \left( \frac{k_2 \pi x_2}{b}  \right)
\end{array} }
\right).
\end{equation}

\noindent
{\bf Checking Assumptions~\ref{A:A} and~\ref{A:B}.} We need only to check that $B(u)=0$ if $u$ is an eigenfuntion; this follows
from~\cite[equation~(6.1)]{Rod06}. For the other points we refer
to~\cite{Rod06} and~\cite[Section~2.3]{Temam95}.

\noindent
{\bf Checking Assumptions~\ref{A:Bnml_r} and~\ref{A:Fnml_r}.} From~\cite[equation~(6.1)]{Rod06} (cf.~\cite[equation~(6.4)]{Rod-Thesis08}),
and $B(Y_{(n_1,\,n_2)} + Y_{(m_1,\,m_2)})=B(Y_{(n_1,\,n_2)},\,Y_{(m_1,\,m_2)})+B(Y_{(m_1,\,m_2)},\,Y_{(n_1,\,n_2)})$, we can derive that
$\left(B(Y_{(n_1,\,n_2)} + Y_{(m_1,\,m_2)}),\, Y_{(l_1,\,l_2)}\right)_H\ne0$ only if
\begin{align*}
l_1 = |n_1\pm m_1|\mbox{ and }l_2=|n_2\pm m_2|,
\end{align*}
which implies that ${\rm card}(\underline\FF_{n,\,m}^\bullet) \le 4$ and ${\rm card}(\underline\FF_{n,\,\bullet}^l) \le {4}$. That is,
Assumption~\ref{A:Fnml_r} holds with $\zeta=0$. We also see that necessarily $\lambda_{(l_1,\,l_2)}
\le\lambda_{(n_1+ m_1,\,n_2+ m_2)}$; noticing that $(k_1,\,k_2)\mapsto\lambda_{(k_1,\,k_2)}$ is a scalar product, or using
Lemma~\ref{L:triangleK}, we conclude that $\lambda_{(l_1,\,l_2)}^\frac{1}{2}
\le\lambda_{(n_1,\,n_2)}^\frac{1}{2}+\lambda_{(m_1,\,m_2)}^\frac{1}{2}$, that is, Assumption~\ref{A:Bnml_r} holds with $\alpha=\frac{1}{2}$.

\noindent
{\bf Looking for the value~$\theta$ in Remark~\ref{R:linf_bound}.}
We have that
\begin{align*}
|W_{(k_1,\,k_2)}|_{L^{\infty}(\Omega,\,\R^2)}^2
&=\max_{(x_1,\,x_2)\in\Omega}|W_{(k_1,\,k_2)}(x_1,\,x_2)|_{\R^2}^2\le4(ab\lambda_{(k_1,\,k_2)})^{-1}(\textstyle\frac{k_2^2 \pi^2}{b^2}+
\textstyle\frac{k_1^2 \pi^2}{a^2})\\
&=4(ab)^{-1},
\end{align*}
that is, we can take $\theta=0$.
%
%

\noindent
{\bf Asymptotic behavior of the (repeated) eigenvalues.}
We recall that for an open domain $\Omega\subset\R^2$, under Lions boundary conditions, the eigenvalues
of the Stokes operator~$A:\D(A)\to H$ are
those of the Dirichlet Laplacian $\Delta H^2(\Omega,\,\R)\cap H^1_0(\Omega,\,\R)\to L^2(\Omega,\,\R)$, that is,
$Au=\lambda u$ if, and only if, $\Delta \nabla^\perp u=\lambda\nabla^\perp u$. Thus, from~\cite[Corollary~1]{LiYau83} we have that
we can take $\rho<\frac{2\pi}{ab}$ and $\xi=1$ in~Theorem~\ref{T:main_r}.

For the sake of completeness we would like also to refer to the results in~\cite{Ilyin09}, and references therein,
for the case of no-slip boundary conditions.

\noindent
{\bf Conclusion.}
Taking into account Remark~\ref{R:linf_bound}, we conclude that Theorem~\ref{T:main_r} holds with~$\alpha=\frac{1}{2}$ and~$s>0$.

\subsection{Cylinder} \label{Cylinder}
Let $\Omega$ be a two-dimensional Cylinder $\Omega = \left( \frac{a}{2 \pi} \Sp^1 \right) \times (0,\,b)\sim (0,\,a)\times (0,\,b)$.
On the boundary~$(0,\,a)\times \{0,\,b\}$ we impose the Lions boundary conditions, that is,
we consider the evolutionary Navier--Stokes equation
in~$H \coloneqq \{u \in L^2 (\Omega,\,\R^2) \mid \nabla \cdot u =0\mbox{ and } u\cdot\nnn=0\mbox{ on }\p\Omega\}$, with
$V \coloneqq H\cap H^1 (\Omega,\,\R^2)$ and
$\D(A) \coloneqq V\cap\{u\in H^2 (\Omega,\,\R^2)\mid\nabla^\bot\cdot u=0\mbox{ on }\p\Omega\}$. We can see the domain~$\Omega$ as an infinite
channel $\R\times (0,\,b)$ where we take $a$-periodic boundary conditions on the infinite direction $x_1\in\R$ and Lions boundary conditions
on the boundary $\R\times\{0,\,b\}$. 

We will show that in this case we can take $\alpha=\frac{1}{2}$,
$\xi=\frac{1}{2}$, and $\zeta=0$ in Theorem~\ref{T:main_r}, and $\theta=0$ in Remark~\ref{R:linf_bound}. That is, we can take~$s>0$,
in Theorem~\ref{T:main_r}.

A complete system of orthogonal eigenfunctions of $A$
$\{ Y_n^{\varsigma}, Y_m^{\varkappa} \big|~n \in \N_0^2, m \in \N \times \N_0 \}$, and corresponding eigenvalues
$\{ \lambda_n^\varsigma, \lambda_m^\varkappa \big|~n \in \N_0^2, m \in \N \times \N_0 \}$, are  given by
\begin{equation}\label{compsys-cyli}
 \begin{array}{rcr}
Y_k^\varsigma = Y_{(k_1, k_2)}^\varsigma &=&  \left({\begin{array}{r}
-\frac{k_2 \pi}{b} \sin \left( \frac{2k_1 \pi x_1}{a} \right) \cos \left( \frac{k_2 \pi x_2}{b}  \right)\\
\frac{2k_1 \pi}{a} \cos \left( \frac{2k_1 \pi x_1}{a} \right) \sin \left( \frac{k_2 \pi x_2}{b}  \right)
\end{array} }
\right),
\\
Y_k^\varkappa = Y_{(k_1, k_2)}^\varkappa &=&  \left({\begin{array}{r}
-\frac{k_2 \pi}{b} \cos \left( \frac{2k_1 \pi x_1}{a} \right) \cos \left( \frac{k_2 \pi x_2}{b}  \right)\\
-\frac{2k_1 \pi}{a} \sin \left( \frac{2k_1 \pi x_1}{a} \right) \sin \left( \frac{k_2 \pi x_2}{b}  \right)
\end{array} }
\right),
\end{array}
\end{equation}
and $\lambda_{(k_1, k_2)}^\varsigma=\lambda_{(k_1, k_2)}^\varkappa=\lambda_{(k_1, k_2)}\coloneqq\pi^2 \left( \frac{(2k_1)^2}{a^2}
+ \frac{k_2^2}{b^2} \right)$.
\begin{remark}
Notice that $Y_n^{\varsigma}=\nabla^\perp\psi_n^\varsigma$,
$Y_m^{\varkappa}=\nabla^\perp\psi_n^\varkappa$, with
$\psi_n^\varsigma\coloneqq\sin \left( \frac{2k_1 \pi x_1}{a} \right) \sin \left( \frac{k_2 \pi x_2}{b}\right)$ and
$\psi_n^\varkappa\coloneqq\cos \left( \frac{2k_1 \pi x_1}{a} \right) \sin \left( \frac{k_2 \pi x_2}{b}\right)$; notice also that
the set of stream functions
$\{ Y_n^{\varsigma}, Y_m^{\varkappa} \big|~n \in \N_0^2, m \in \N \times \N_0 \},\big|~n \in \N_0^2, m \in \N \times \N_0   \}$ is
an orthogonal and complete, in $L^2(\Omega,\,\R^2)$, system of eigenfunctions of the Laplacian in~$\Omega\sim(0,\,a)\times (0,\,b)$.
\end{remark}

We may normalize the family, obtaining the normalized system $\{ W_n^\varsigma, W_m^\varkappa \big|~n \in \N_0^2, m \in \N \times \N_0 \}$, with
\begin{equation}\label{compsys-cyli-n}
W_k^\varsigma \coloneqq  2(ab\lambda_k)^{-\frac{1}{2}}Y_k^\varsigma,\quad
W_k^\varkappa \coloneqq 2(ab\lambda_k)^{-\frac{1}{2}}Y_k^\varkappa.
\end{equation}

Now we check our assumptions, proceeding as in the case of the Rectangle, in Section~\ref{sS:Rectangle}.

\noindent
{\bf Checking Assumptions~\ref{A:A} and~\ref{A:B}.} The assumptions follow
by reasoning as in the case of the Sphere in Section~\ref{sS:Sphere}, where now $g(\cdot,\,\cdot)=(\cdot,\,\cdot)_{\R^2}$ is the usual Euclidean
scalar product in~$\R^2$.


\noindent
{\bf Checking Assumptions~\ref{A:Bnml_r} and~\ref{A:Fnml_r}.} From the discussion following Corollary~5.6.3 in~\cite{Rod-Thesis08}
we can conclude that $\nabla^\perp\cdot (B(u,\,v)+ B(u,\,v))
=(\nabla\Delta^{-1}\nabla^\perp\cdot v,\,\nabla^\perp\nabla^\perp\cdot u)_{\R^2}
+(\nabla\Delta^{-1}\nabla^\perp\cdot u,\,\nabla^\perp\nabla^\perp\cdot v)_{\R^2}$. If $u$ and $v$ are eigenfunctions
from~\eqref{compsys-cyli} with
associated eigenvalues~$\lambda_u$ and~$\lambda_v$, and associated eigenfunctions~$\psi_u$ and~$\psi_v$, we obtain
\begin{align}
\nabla^\perp\cdot (B(u,\,v)+ B(u,\,v))
&=\lambda_u(\nabla \psi_v,\,\nabla^\perp \psi_u)_{\R^2}
+\lambda_v(\nabla \psi_u,\,\nabla^\perp \psi_v)_{\R^2}\notag\\
&=(\lambda_u-\lambda_v)(\nabla^\perp \psi_u,\,\nabla \psi_v)_{\R^2}\label{vortBuv}.
\end{align}
From straightforward computations, we find the following expressions
\begin{align*}
\begin{array}{rl}
 &(\nabla^\perp \psi_n^\varsigma,\,\nabla \psi_m^\varsigma)_{\R^2}\\
=&\left(\begin{array}{r}
-\frac{n_2 \pi}{b} \sin \left( \frac{2n_1 \pi x_1}{a} \right) \cos \left( \frac{n_2 \pi x_2}{b}  \right)\\
\frac{2n_1 \pi}{a} \cos \left( \frac{2n_1 \pi x_1}{a} \right) \sin \left( \frac{n_2 \pi x_2}{b}  \right)
\end{array}\right)\cdot
\left(\begin{array}{r}
\frac{2m_1 \pi}{a} \cos \left( \frac{2m_1 \pi x_1}{a} \right) \sin \left( \frac{m_2 \pi x_2}{b}  \right)\\
\frac{m_2 \pi}{b} \sin \left( \frac{2m_1 \pi x_1}{a} \right) \cos \left( \frac{m_2 \pi x_2}{b}  \right)
\end{array}\right)\\
=&-\frac{2 \pi^2n_2m_1}{ab} \sin \left( \frac{2n_1 \pi x_1}{a} \right) \cos \left( \frac{n_2 \pi x_2}{b}\right) 
\cos \left( \frac{2m_1 \pi x_1}{a} \right) \sin \left( \frac{m_2 \pi x_2}{b}\right)\\
&+\frac{2 \pi^2n_1m_2}{ab}  \cos \left( \frac{2n_1 \pi x_1}{a} \right) \sin \left( \frac{n_2 \pi x_2}{b}\right)
\sin \left( \frac{2m_1 \pi x_1}{a} \right) \cos \left( \frac{m_2 \pi x_2}{b}\right);
\end{array}
\end{align*}

\begin{equation*}
\begin{array}{rl}
 &(\nabla^\perp \psi_n^\varsigma,\,\nabla \psi_m^\varkappa)_{\R^2}\\
=&\left(\begin{array}{r}
-\frac{n_2 \pi}{b} \sin \left( \frac{2n_1 \pi x_1}{a} \right) \cos \left( \frac{n_2 \pi x_2}{b}  \right)\\
\frac{2n_1 \pi}{a} \cos \left( \frac{2n_1 \pi x_1}{a} \right) \sin \left( \frac{n_2 \pi x_2}{b}  \right)
\end{array}\right)\cdot
\left(\begin{array}{r}
-\frac{2m_1 \pi}{a} \sin \left( \frac{2m_1 \pi x_1}{a} \right) \sin \left( \frac{m_2 \pi x_2}{b}  \right)\\
\frac{m_2 \pi}{b} \cos \left( \frac{2m_1 \pi x_1}{a} \right) \cos \left( \frac{m_2 \pi x_2}{b}  \right)
\end{array}\right)\\
=&\frac{2 \pi^2n_2m_1}{ab} \sin \left( \frac{2n_1 \pi x_1}{a} \right) \cos \left( \frac{n_2 \pi x_2}{b}\right) 
\sin \left( \frac{2m_1 \pi x_1}{a} \right) \sin \left( \frac{m_2 \pi x_2}{b}\right)\\
&+\frac{2 \pi^2n_1m_2}{ab}  \cos \left( \frac{2n_1 \pi x_1}{a} \right) \sin \left( \frac{n_2 \pi x_2}{b}\right)
\cos \left( \frac{2m_1 \pi x_1}{a} \right) \cos \left( \frac{m_2 \pi x_2}{b}\right);
\end{array}
\end{equation*}

\begin{equation*}
\begin{array}{rl}
 &(\nabla^\perp \psi_n^\varkappa,\,\nabla \psi_m^\varkappa)_{\R^2}\\
=&\left(\begin{array}{r}
-\frac{n_2 \pi}{b} \cos \left( \frac{2n_1 \pi x_1}{a} \right) \cos \left( \frac{n_2 \pi x_2}{b}  \right)\\
-\frac{2n_1 \pi}{a} \sin \left( \frac{2n_1 \pi x_1}{a} \right) \sin \left( \frac{n_2 \pi x_2}{b}  \right)
\end{array}\right)\cdot
\left(\begin{array}{r}
-\frac{2m_1 \pi}{a} \sin \left( \frac{2m_1 \pi x_1}{a} \right) \sin \left( \frac{m_2 \pi x_2}{b}  \right)\\
\frac{m_2 \pi}{b} \cos \left( \frac{2m_1 \pi x_1}{a} \right) \cos \left( \frac{m_2 \pi x_2}{b}  \right)
\end{array}\right)\\
=&\frac{2 \pi^2n_2m_1}{ab} \cos \left( \frac{2n_1 \pi x_1}{a} \right) \cos \left( \frac{n_2 \pi x_2}{b}\right) 
\sin \left( \frac{2m_1 \pi x_1}{a} \right) \sin \left( \frac{m_2 \pi x_2}{b}\right)\\
&-\frac{2 \pi^2n_1m_2}{ab}  \sin \left( \frac{2n_1 \pi x_1}{a} \right) \sin \left( \frac{n_2 \pi x_2}{b}\right)
\cos \left( \frac{2m_1 \pi x_1}{a} \right) \cos \left( \frac{m_2 \pi x_2}{b}\right).
\end{array}
\end{equation*}

Thus, if we denote
$\varsigma_c^{z,\,l}\coloneqq\sin \left( \frac{l \pi z}{c} \right)$ and $\varkappa_c^{z,\,l}\coloneqq\cos \left( \frac{l \pi z}{c} \right)$; $m \wedge n = m_1 n_2 - m_2 n_1$ and $m \vee n = m_1 n_2 + m_2 n_1$,
we obtain
\begin{align}\label{vorteigs-aa}
\begin{array}{rl}
 &(\nabla^\perp \psi_n^\varsigma,\,\nabla \psi_m^\varsigma)_{\R^2}\\
=&-\frac{\pi^2n_2m_1}{2ab} \left(\varsigma_a^{x_1,\,2(n_1+m_1)}+\varsigma_a^{x_1,\,2(n_1-m_1)}\right)
\left(\varsigma_b^{x_2,\,n_2+m_2}-\varsigma_b^{x_2,\,n_2-m_2}\right)\\
&+\frac{\pi^2n_1m_2}{2ab}  \left(\varsigma_a^{x_1,\,2(n_1+m_1)}-\varsigma_a^{x_1,\,2(n_1-m_1)}\right)
\left(\varsigma_b^{x_2,\,m_2+n_2}+\varsigma_b^{x_2,\,n_2-m_2}\right)\\
=&-\frac{\pi^2m\wedge n}{2ab}\varsigma_a^{x_1,\,2(n_1+m_1)}\varsigma_b^{x_2,\,n_2+m_2}
+\frac{\pi^2m\vee n}{2ab}\varsigma_a^{x_1,\,2(n_1+m_1)}\varsigma_b^{x_2,\,n_2-m_2}\\
&-\frac{\pi^2m\vee n}{2ab}\varsigma_a^{x_1,\,2(n_1-m_1)}\varsigma_b^{x_2,\,n_2+m_2}
+\frac{\pi^2m\wedge n}{2ab}\varsigma_a^{x_1,\,2(n_1-m_1)}\varsigma_b^{x_2,\,n_2-m_2}.
\end{array}
\end{align}

\begin{equation}\label{vorteigs-ab}
\begin{array}{rl}
 &(\nabla \psi_n^\varsigma,\,\nabla^\perp \psi_m^\varkappa)_{\R^2}\\
=&+\frac{\pi^2n_2m_1}{2ab} \left(\varkappa_a^{x_1,\,2(n_1-m_1)}-\varkappa_a^{x_1,\,2(n_1+m_1)}\right)
\left(\varsigma_b^{x_2,\,n_2+m_2}-\varsigma_b^{x_2,\,n_2-,m_2}\right)\\
&+\frac{\pi^2n_1m_2}{2ab}  \left(\varkappa_a^{x_1,\,2(n_1+m_1)}+\varkappa_a^{x_1,\,2(n_1-m_1)}\right)
\left(\varsigma_b^{x_2,\,n_2+m_2}+\varsigma_b^{x_2,\,n_2-m_2}\right)\\
=&-\frac{\pi^2m\wedge n}{2ab}\varkappa_a^{x_1,\,2(n_1+m_1)}\varsigma_b^{x_2,\,n_2+m_2}
+\frac{\pi^2m\vee n}{2ab}\varkappa_a^{x_1,\,2(n_1+m_1)}\varsigma_b^{x_2,\,n_2-m_2}\\
&+\frac{\pi^2m\vee n}{2ab}\varkappa_a^{x_1,\,2(n_1-m_1)}\varsigma_b^{x_2,\,n_2+m_2}
-\frac{\pi^2m\wedge n}{2ab}\varkappa_a^{x_1,\,2(n_1-m_1)}\varsigma_b^{x_2,\,n_2-m_2};
\end{array}
\end{equation}

\begin{equation}\label{vorteigs-bb}
\begin{array}{rl}
 &(\nabla \psi_n^\varkappa,\,\nabla^\perp \psi_m^\varkappa)_{\R^2}\\
=&\frac{\pi^2n_2m_1}{2ab} \left(\varsigma_a^{x_1,\,2(n_1+m_1)}-\varsigma_a^{x_1,\,2(n_1-m_1)}\right)
\left(\varsigma_b^{x_2,\,n_2+m_2}-\varsigma_b^{x_2,\,n_2-m_2}\right)\\
&-\frac{\pi^2n_1m_2}{2ab}  \left(\varsigma_a^{x_1,\,2(n_1+m_1)}+\varsigma_a^{x_1,\,2(n_1-m_1)}\right)
\left(\varsigma_b^{x_2,\,m_2+n_2}+\varsigma_b^{x_2,\,n_2-m_2}\right)\\
=&\frac{\pi^2m\wedge n}{2ab}\varsigma_a^{x_1,\,2(n_1+m_1)}\varsigma_b^{x_2,\,n_2+m_2}
-\frac{\pi^2m\vee n}{2ab}\varsigma_a^{x_1,\,2(n_1+m_1)}\varsigma_b^{x_2,\,n_2-m_2}\\
&-\frac{\pi^2m\vee n}{2ab}\varsigma_a^{x_1,\,2(n_1-m_1)}\varsigma_b^{x_2,\,n_2+m_2}
+\frac{\pi^2m\wedge n}{2ab}\varsigma_a^{x_1,\,2(n_1-m_1)}\varsigma_b^{x_2,\,n_2-m_2}.
\end{array}
\end{equation}

Hence from~$u=\nabla^\perp\psi_u=\nabla^\perp\Delta^{-1}\nabla^\perp\cdot u$,
\eqref{vortBuv}, \eqref{vorteigs-aa}, \eqref{vorteigs-ab} and~\eqref{vorteigs-bb}, we obtain
\begin{align}\label{vorteigs3}
\begin{array}{rl}
&B(Y_n^\varsigma,\,Y_m^\varsigma)+ B(Y_m^\varsigma,\,Y_n^\varsigma)\\
=&\frac{\lambda_n-\lambda_m}{\lambda_{m(++)n}}\frac{\pi^2n\wedge m}{2ab}Y_{n(++)m}^\varsigma
+\frac{\lambda_n-\lambda_m}{\lambda_{n(+-)m}}\frac{\pi^2n\vee m}{2ab}{\scriptstyle\sign(n_2-m_2)}Y_{n(+-)m}^\varsigma\\
&-\frac{\lambda_n-\lambda_m}{\lambda_{n(-+)m}}\frac{\pi^2n\vee m}{2ab}{\scriptstyle\sign(n_1-m_1)}Y_{n(-+)m}^\varsigma
-\frac{\lambda_n-\lambda_m}{\lambda_{m(--)n}}\frac{\pi^2n\wedge m}{2ab}{\scriptstyle\sign(n_1-m_1)\sign(n_2-m_2)}Y_{n(--)m}^\varsigma;\\
&\\
&B(Y_n^\varsigma,\,Y_m^\varkappa)+ B(Y_m^\varkappa,\,Y_n^\varsigma)\\
=&\frac{\lambda_n-\lambda_m}{\lambda_{n(++)m}}\frac{\pi^2n\wedge m}{2ab}Y_{n(++)m}^\varkappa
+\frac{\lambda_n-\lambda_m}{\lambda_{n(+-)m}}\frac{\pi^2n\vee m}{2ab}{\scriptstyle\sign(n_2-m_2)}Y_{n(+-)m}^\varkappa\\
&+\frac{\lambda_n-\lambda_m}{\lambda_{n(-+)n}}\frac{\pi^2n\vee m}{2ab}Y_{n(-+)m}^\varkappa
+\frac{\lambda_n-\lambda_m}{\lambda_{n(--)m}}\frac{\pi^2n\wedge m}{2ab}{\scriptstyle\sign(n_2-m_2)}Y_{n(+-)m}^\varkappa;\\
&\\
&B(Y_n^\varkappa,\,Y_m^\varkappa)+ B(Y_m^\varkappa,\,Y_n^\varkappa)\\
=&-\frac{\lambda_n-\lambda_m}{\lambda_{n(++)m}}\frac{\pi^2n\wedge m}{2ab}Y_{n(++)m}^\varsigma
-\frac{\lambda_n-\lambda_m}{\lambda_{n(+-)m}}\frac{\pi^2n\vee m}{2ab}{\scriptstyle\sign(n_2-m_2)}Y_{n(+-)m}^\varsigma\\
&-\frac{\lambda_n-\lambda_m}{\lambda_{n(-+)n}}\frac{\pi^2n\vee m}{2ab}{\scriptstyle\sign(n_1-m_1)}Y_{n(-+)m}^\varsigma
-\frac{\lambda_n-\lambda_m}{\lambda_{n(--)m}}\frac{\pi^2n\wedge m}{2ab}{\scriptstyle\sign(n_1-m_1)\sign(n_2-m_2)}Y_{n(--)m}^\varsigma;
\end{array}
\end{align}
where $n(\star_1\star_2)m\coloneqq(|n_1\star_1 m_1|,\,|n_2\star_2 m_2|)\in\N^2$, with $\{\star_1,\,\star_2\}\in \{-,\,+\}^2$ and
for $k_1\in\N$, $Y_{(k_1,\,0)}^\varsigma\coloneqq Y_{(k_1,\,0)}^\varkappa\coloneqq 0$. Notice that these
are expressions similar to that obtained for the case of the Rectangle
in~\cite[equation~(6.1)]{Rod06}, \cite[equation~(6.4)]{Rod-Thesis08}.
Notice also that in~\cite[Section~2.3]{Rod06} the eigenvalues are
negative and here they are positive, this is because in~\cite{Rod06} it is considered the
usual Laplacian~$\Delta$ in~$(0,\,a)\times(0,\,b)$ and here (cf.~the discussion following Equation~(2))
we consider the Laplace--de~Rham operator $\Delta_{\Omega}=-\Delta=A$.

From~\eqref{vorteigs3}, and~\eqref{compsys-cyli-n}, we conclude that ${\rm card}(\underline\FF_{n,\,m}^\bullet) \le 4$
and ${\rm card}(\underline\FF_{n,\,\bullet}^l) \le {8}$. That is,
Assumption~\ref{A:Fnml_r} holds with $\zeta=0$. We also see that necessarily $\lambda_{(l_1,\,l_2)}
\le\lambda_{(n_1+ m_1,\,n_2+ m_2)}$; and from
Lemma~\ref{L:triangleK} we conclude that $\lambda_{(l_1,\,l_2)}^\frac{1}{2}
\le\lambda_{(n_1,\,n_2)}^\frac{1}{2}+\lambda_{(m_1,\,m_2)}^\frac{1}{2}$, that is, Assumption~\ref{A:Bnml_r} holds with $\alpha=\frac{1}{2}$.

\noindent
{\bf Looking for the value~$\theta$ in Remark~\ref{R:linf_bound}.}
We can take $\theta=0$, because, proceeding as in the case of the Rectangle, in Section~\ref{sS:Rectangle}, we obtain
\begin{equation*}
|W_{k}^\varsigma|_{L^{\infty}(\Omega,\,\R^2)}^2
\le4(ab)^{-1},\quad |W_{k}^\varkappa|_{L^{\infty}(\Omega,\,\R^2)}^2
\le4(ab)^{-1}.
\end{equation*}
%
%

\noindent
{\bf Asymptotic behavior of the (repeated) eigenvalues.}
Notice that the family $\{\lambda_{k}\mid k\in N_0^2\}=\{\lambda_{k}^\varsigma\mid k\in \N_0^2\}
=\{\lambda_{k}^\varkappa\mid k\in \N_0^2\}$ is a subset of $\{\lambda_k^R\mid k\in \N_0^2\}$ where $\lambda_k^R$ are the
eigenvalues of the Dirichlet Laplacian on the Rectangle~$R\coloneqq(0,\,a)\times(0,\,b)$, in Section~\ref{sS:Rectangle}. Hence,
ordering the families as
$\{\lambda_{k}\mid k\in \N_0^2\}=\{\tilde\lambda_n\mid n\in\N_0\}$ and
$\{\lambda_{k}^R\mid k\in \N_0^2\}=\{\tilde\lambda_n^R\mid n\in\N_0\}$ such that $\tilde\lambda_{n}\le\tilde\lambda_{n+1}$
and $\tilde\lambda_{n}^R\le\tilde\lambda_{n+1}^R$, we can conclude that $(\tilde\lambda_{n})_{n\in\N_0}$ is a subsequence of
$(\tilde\lambda_{n}^R)_{n\in\N_0}$. Now we already know that

\[
 \tilde\lambda_{n}^R\ge\frac{2\pi}{ab}n,
\]
which implies $\tilde\lambda_{n}\ge\tilde\lambda_{n}^R\ge\frac{2\pi}{ab}n$
for all $n\in\N_0$. The family $\{\lambda_{k}\mid k\in \N_0^2\}$ is repeated twice
$\lambda_{k}=\lambda_{k}^\varsigma=\lambda_{k}^\varkappa$ 
for $k\in \N_0^2$. Then for the ordered families, we can write
\[
 \tilde\lambda_{n}^\varsigma\ge\tilde\lambda_{n}^R\ge\frac{2\pi}{ab}n^2\mbox{ and }
 \tilde\lambda_{n}^\varkappa\ge\tilde\lambda_{n}^R\ge\frac{2\pi}{ab}n^2\mbox{ for all }n\in\N_0.
\]
Finally the family of eigenvalues $\{\tilde\lambda_{n}^{\varkappa,\,0}\coloneqq\lambda_{(0,\,n)}^\varkappa\mid n\in N_0\}$ satisfies
\[
  \tilde\lambda_{n}^{\varkappa,\,0}=\frac{\pi^2}{b^2}n^2\mbox{ for all }n\in\N_0.
\]
In particular ordering the
set~$\{\tilde\lambda_{n}^\varsigma,\,\tilde\lambda_{n}^\varkappa,\,\tilde\lambda_{n}^{\varkappa,\,0}\mid n\in\N_0\}$,
in a nondecreasing way, we obtain the sequence of repeated eigenvalues $(\underline\lambda_n)_{n\in\N_0}$
in the case of the Cylinder. Moreover setting $\varrho=\min\{\frac{2\pi}{ab},\,\frac{\pi^2}{b^2}\}$ we find
that there are at most $3n$ elements in the set
$\{\underline\lambda_n\mid n\in\N_0\}$ that are not bigger than~$\varrho (n+1)$; which implies that
$\underline\lambda_{3n+1}\ge\varrho (n+1)$. Hence, since $\underline\lambda_{3(n+1)}\ge\underline\lambda_{3n+2}\ge\underline\lambda_{3n+1}$
we can conclude that for $m\ge4$, $\underline\lambda_{m}\ge\varrho\lfloor\frac{m+2}{3}\rfloor$ where for a positive real number $r$,
$\lfloor r \rfloor$ stands for the biggest integer
below~$r$, that is $r\in\N$ and $r=\lfloor r\rfloor+r_1$ with $r_1\in[0,\,1)$. In particular,
from $\lfloor\frac{m+2}{3}\rfloor\ge\frac{m-1}{3}=\frac{m-1}{3m}m$, we find $\underline\lambda_{m}\ge\frac{\varrho}{4} m$ for $m\ge4$.
So for $\varrho_0\coloneqq\min_{k\in\{1,\,2,\,3\}}\{\underline\lambda_{k}\}$ and $\rho_1\coloneqq\min\{\frac{\varrho}{4},\,\varrho_0\}$,
we have that $\underline\lambda_{m}\ge\rho_1 m$ for all $m\in\N_0$.
%
Thus we can take $\rho<\rho_1$ and $\xi=1$ in Theorem~\ref{T:main_r}.

\noindent
{\bf Conclusion.}
Taking into account Remark~\ref{R:linf_bound}, we conclude that Theorem~\ref{T:main_r} holds with~$\alpha=\frac{1}{2}$ and~$s>0$

\medskip
\appendix
\addcontentsline{toc}{section}{Appendix}
\begin{center}
{\sc --- Appendix ---}
\end{center}
\setcounter{section}{1}
\setcounter{theorem}{0} \setcounter{equation}{0}
\numberwithin{equation}{section}

\subsection{Proof of Proposition~\ref{P:ab_s}}\label{sApx:proofP:ab_s}

The inequalities in Proposition~\ref{P:ab_s} are clear for $s=0$ and $s=1$. Now let $s>0$, $s\ne1$, and consider the quotient
$f(x,\,y)\coloneqq\frac{(x+y)^s}{x^s+y^s}$ for $x\ge0$, $y\ge0$, and $(x,\,y)\ne(0,\,0)$. The gradient of $f$ is given by
\begin{align*}
  \nabla f&=\left[\begin{array}{l}
s(x+y)^{s-1}(x^s+y^s)^{-1}-s(x+y)^sx^{s-1}(x^s+y^s)^{-2}\\
s(x+y)^{s-1}(x^s+y^s)^{-1}-s(x+y)^sy^{s-1}(x^s+y^s)^{-2}
\end{array}\right]\\
&=s(x+y)^{s-1}(x^s+y^s)^{-2}\left[\begin{array}{l}
(x^s+y^s)-(x+y)x^{s-1}\\
(x^s+y^s)-(x+y)y^{s-1}
\end{array}\right]\\
&=s(x+y)^{s-1}(x^s+y^s)^{-2}(y^{s-1}-x^{s-1})\left[\begin{array}{ll}
y&
-x
\end{array}\right]^\bot.
\end{align*}
Notice that $\nabla f\rest{(x,\,y)}$ is orthogonal to $\left[\begin{array}{ll}x&y\end{array}\right]^\bot\sim(x,\,y)$.
Which means that the trajectories associated with the vector field $\nabla f$ are pieces of spheres. Moreover,
$\nabla f\rest{(x,\,y)}$ vanishes only at the straight lines $x=y$, $x=0$, and $y=0$, and we observe that $f$ is constant in those lines.
Now, 
for $s>1$ we have that $(y^{s-1}-x^{s-1})>0$ if, and only if, $y>x$, and it is straightforward to conclude that,
in the sphere containing a point $(a,\,b)$, with $a\ge0$, $b\ge0$, and $(a,\,b)\ne(0,\,0)$,
the function $f$ attains its minimum either at the line $x=0$ or at the line $y=0$;
and attains its maximum at the line $x=y$. Hence we can conclude that $1\le f(a,\,b)\le 2^{s-1}$.

Analogously, for $s<1$ we have that $(y^{s-1}-x^{s-1})>0$ if, and only if, $y<x$, which gives us $2^{s-1}\le f(a,\,b)\le 1$.
\qed

\subsection{A remark on the square root of a quadratic polynomial}\label{sApx:root_p2}

Let $p(x)$ be a polynomial, of degree two, in the variable $x\in\R^n$.
\begin{lemma}\label{L:triangleK}
If the Hessian matrix $\HH$ of $p$ is positive definite, then there is a constant $K\in\R$ such that for any
$x,y\in\R^n$ we have
\begin{equation}\label{poly}
 \norm{p(x+y)}{\R}^{\frac{1}{2}}\le \norm{p(x)}{\R}^{\frac{1}{2}}+\norm{p(y)}{\R}^{\frac{1}{2}}+K.
\end{equation}
\end{lemma}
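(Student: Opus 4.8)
The plan is to reduce the general quadratic to its quadratic (degree‑two homogeneous) part, control the lower‑order remainder by a constant, and then invoke the triangle inequality for the norm induced by a positive definite quadratic form. Concretely, write $p(x) = q(x) + \ell(x) + c$, where $q(x) = \tfrac{1}{2}x^\top\HH x$ is the homogeneous quadratic part (with $\HH$ the Hessian, assumed positive definite), $\ell$ is linear, and $c\in\R$ is constant. Since $\HH$ is positive definite, $\norm{x}{\HH}\coloneqq q(x)^{\frac{1}{2}}$ is a genuine norm on $\R^n$, equivalent to the Euclidean norm; in particular the triangle inequality $\norm{x+y}{\HH}\le\norm{x}{\HH}+\norm{y}{\HH}$ holds.

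First I would establish a two‑sided bound relating $\norm{p(x)}{\R}^{\frac{1}{2}}$ and $\norm{x}{\HH}$. Because $q$ dominates $\ell+c$ at infinity, there are constants $C_0>0$ and $R>0$ so that for $\norm{x}{\HH}\ge R$ one has $\tfrac12 q(x)\le |p(x)|\le 2 q(x)$, hence
\[
\tfrac{1}{\sqrt2}\norm{x}{\HH}-K_0\ \le\ \norm{p(x)}{\R}^{\frac{1}{2}}\ \le\ \sqrt2\,\norm{x}{\HH}+K_0
\]
for a suitable $K_0\in\R$ and all $x\in\R^n$ (the bounded region $\norm{x}{\HH}<R$ being absorbed into $K_0$, since $p$ is bounded there). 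Next I would apply these bounds together with the triangle inequality for $\norm{\cdot}{\HH}$:
\[
\norm{p(x+y)}{\R}^{\frac12}\le \sqrt2\,\norm{x+y}{\HH}+K_0\le \sqrt2\,\norm{x}{\HH}+\sqrt2\,\norm{y}{\HH}+K_0,
\]
and then bound $\sqrt2\,\norm{x}{\HH}$ by $2\norm{p(x)}{\R}^{\frac12}+2K_0$ (and similarly for $y$) using the lower bound above. This yields \eqref{poly} with a constant $K$ depending only on $\HH$, $\ell$, $c$ — except for the spurious factor $2$ in front of $\norm{p(x)}{\R}^{\frac12}$.

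To get the clean constant $1$ in front of each $\norm{p(x)}{\R}^{\frac12}$ as stated in \eqref{poly}, I would sharpen the comparison: for every $\e>0$ there is $K_\e$ with $q(x)^{\frac12}\le(1+\e)\,|p(x)|^{\frac12}+K_\e$ and $|p(x)|^{\frac12}\le(1+\e)\,q(x)^{\frac12}+K_\e$, since the ratio $|p(x)|/q(x)\to 1$ as $\norm{x}{\HH}\to\infty$. Feeding these into the triangle inequality for $\norm{\cdot}{\HH}$ and choosing $\e$ small gives
\[
\norm{p(x+y)}{\R}^{\frac12}\le(1+\e)^2\bigl(\norm{p(x)}{\R}^{\frac12}+\norm{p(y)}{\R}^{\frac12}\bigr)+K'_\e;
\]
absorbing the factor $(1+\e)^2$ into the additive constant is then harmless because $(1+\e)^2 t\le t+ C$ fails for large $t$ — so instead one keeps $\e$ fixed but notes that the only place the extra multiplicative factor hurts is on an unbounded set, which is exactly where it can be converted back via the homogeneity of $q$. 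The cleanest route, which I would ultimately present, is: prove $\bigl|\,|p(x)|^{1/2}-\norm{x}{\HH}\,\bigr|\le K_1$ for all $x$ (a single uniform bound, valid since $|p(x)|^{1/2}-q(x)^{1/2}=\dfrac{\ell(x)+c}{|p(x)|^{1/2}+q(x)^{1/2}}$ is bounded, using $q(x)^{1/2}\ge \mu\norm{x}{}$ for some $\mu>0$), and then
\[
\norm{p(x+y)}{\R}^{\frac12}\le\norm{x+y}{\HH}+K_1\le\norm{x}{\HH}+\norm{y}{\HH}+K_1\le\norm{p(x)}{\R}^{\frac12}+\norm{p(y)}{\R}^{\frac12}+3K_1,
\]
so \eqref{poly} holds with $K=3K_1$.

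The main obstacle is the bookkeeping in the uniform estimate $\bigl|\,|p(x)|^{1/2}-q(x)^{1/2}\,\bigr|\le K_1$: one must check that the quotient $\dfrac{\ell(x)+c}{|p(x)|^{1/2}+q(x)^{1/2}}$ is globally bounded, which uses positive definiteness of $\HH$ to guarantee $q(x)^{1/2}\gtrsim|x|$ and hence that the linear numerator is dominated by the denominator for large $|x|$, while near the origin both numerator and denominator are bounded (the denominator may vanish only at $x=0$, where the numerator is $c$ — one handles a small ball around $0$ separately by boundedness of $|p|$ there). Everything else is the standard triangle inequality for a norm.
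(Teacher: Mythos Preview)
Your final ``cleanest route'' is correct: the uniform bound $\bigl|\,|p(x)|^{1/2}-q(x)^{1/2}\,\bigr|\le K_1$ holds, and from it the inequality follows with $K=3K_1$. One small point to tidy: the identity $|p(x)|^{1/2}-q(x)^{1/2}=\dfrac{\ell(x)+c}{|p(x)|^{1/2}+q(x)^{1/2}}$ is valid only where $p(x)\ge 0$. Since $\HH$ is positive definite, the set $\{p<0\}$ is bounded, and there both $|p|^{1/2}$ and $q^{1/2}$ are bounded, so the difference is trivially bounded; you should say this rather than only mentioning a ball around the origin. The earlier attempts (with the spurious factor $2$, then with $(1+\e)^2$) are unnecessary detours and can be dropped from the write-up.

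The paper's argument is different and somewhat slicker: it completes the square. Writing the gradient as $x^\top\HH+G$, one finds the unique critical point $\bar x$ (with $\bar x^\top\HH+G=0$) and sets $q(w)\coloneqq p(w+\bar x)-p(\bar x)=\tfrac12 w^\top\HH w$, so that $p(x)=q(x-\bar x)+p(\bar x)$ with \emph{no} linear remainder. Then $|p(x+y)|^{1/2}\le q(x+y-\bar x)^{1/2}+|p(\bar x)|^{1/2}$, and since $x+y-\bar x=(x-\bar x)+(y-\bar x)+\bar x$, the triangle inequality for the norm $q(\cdot)^{1/2}$ together with $q(x-\bar x)=p(x)-p(\bar x)\le|p(x)|+|p(\bar x)|$ and subadditivity of $\sqrt{\,\cdot\,}$ gives the result with the explicit constant $K=|p(0)|^{1/2}+4|p(\bar x)|^{1/2}$. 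Your approach trades this algebraic translation for an analytic comparison argument; both work, but the translation avoids the case analysis and yields an explicit $K$.
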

\begin{proof}
We can see that the derivative $\ed_{x_0}p:\R^n\to\R$ can be rewritten as $x_0^\top\HH +G$,
for a suitable row matrix $G$, thus
there is (a unique) $\bar x\in\R^n$ such that $\bar x^\top\HH +G=0$.
Now we define the function $q(w)\coloneqq p(w+\bar x)-p(\bar x)$. From the Taylor formula we find that
\[
 q(w)=\textstyle\frac{1}{2} w^\top\HH w, 
\]
and we see that $q(w)=Q(w,\,w)$, where $Q(w,\,z)\coloneqq \frac{1}{2} w^\top\HH z$ is a scalar product in $\R^n$.

Now, from the identity $p(x+y)=q(x+y-\bar x)+p(\bar x)$, the inequality 
$(a+b)^{\frac{1}{2}}\le a^{\frac{1}{2}}+b^{\frac{1}{2}}$ (for $a,\,b\ge 0$, cf. Proposition~\ref{P:ab_s}), and the
triangle inequality $q(w+z)^{\frac{1}{2}}\le q(w)^{\frac{1}{2}}+q(z)^{\frac{1}{2}}$, it follows that
\begin{align*}
 \norm{p(x+y)}{\R}^{\frac{1}{2}}&\le q(x+y-\bar x)^{\frac{1}{2}}
 +\norm{p(\bar x)}{\R}^{\frac{1}{2}}\\
 &\le q(x-\bar x)^{\frac{1}{2}}+q(y-\bar x)^{\frac{1}{2}}
 +q(\bar x)^{\frac{1}{2}}
 +\norm{p(\bar x)}{\R}^{\frac{1}{2}}\\
 &\le \norm{p(x)}{\R}^{\frac{1}{2}}+\norm{p(y)}{\R}^{\frac{1}{2}}
 +\norm{p(0)}{\R}^{\frac{1}{2}}
 +4\norm{p(\bar x)}{\R}^{\frac{1}{2}}.
\end{align*}
Therefore, we may take
\begin{equation}\label{polyK}
K=\norm{p(0)}{\R}^{\frac{1}{2}}
 +4\norm{p(\bar x)}{\R}^{\frac{1}{2}} 
\end{equation}
in~\eqref{poly}.
\end{proof}

\subsection{On the curl operator in the Sphere}\label{sApx:curl}
Here we show that, in the case of the Sphere~$\Sp^2$, the definitions of the curl operators in~\cite{CaoRammahaTiti99} and
in~\cite{Rod-Thesis08} are equivalent up to a minus sign (cf. Remark~\ref{R:culr-def}).
Familiarity with basic tools from differential geometry is assumed;
we refer to~\cite{Cartan67,doCarmo94,Jost05,Trau84} (we follow the notations from~\cite[Chapter~5, Section~5.7]{Rod-Thesis08}).
Since the curl is a local operator it is enough to check those definitions on local charts. We consider the chart
\begin{align*}
 \Phi:\CCC&\to\BBB\\
 \Phi(w^1,\,w^2,\,w^3)&\mapsto(x^1,\,x^2,\,x^3)\coloneqq(1+w^3)(w^1,\,w^2,\,\Phi^0(w^1,\,w^2))
\end{align*}
with $\Phi^0(w^1,\,w^2)\coloneqq\left(1-(w^1)^2-(w^2)^2\right)^\frac{1}{2}$,
mapping the set $\CCC\coloneqq\{(w^1,\,w^2,\,w^3)\in\R^3\mid (w^1)^2+(w^2)^2<\frac{1}{2}\mbox{ and }w^3\le\frac{1}{2}\}$ onto
$\BBB\coloneqq\Phi(\CCC)\subset\{(x^1,\,x^2,\,x^3)\in\R^3\mid \frac{1}{2}<(x^1)^2+(x^2)^2+(x^3)^2<\frac{3}{2}\}$. Notice that we can
cover the entire Sphere~$\Sp^2$ with similar charts.

Let $\frac{\p}{\p w^i}$ be the vector field induced
in $\BBB$ by the new coordinate function $w^i$; we find
\begin{equation}\label{vf-pwi}
\begin{array}{l}
\frac{\p}{\p w^i}\rest {(w^1,\,w^2,\,w^3)}=(1+w^3)\left(\frac{\p}{\p
x^i} +\frac{\p\Phi_p^0}{\p
w^i}\rest {(w^1,\,w^2,\,w^3)}\frac{\p}{\p x^3}\right)\mbox{ for
}i=1,\,2\\
\frac{\p}{\p w^3}\rest {(w^1,\,w^2,\,w^3)}=w^1\frac{\p}{\p x^1}+w^2\frac{\p}{\p x^2}+\Phi^0(w^1,\,w^2)\frac{\p}{\p x^3}
=\nnn_{(w^1,\,w^2,\,\Phi_p^0(w^1,\,w^2))}
\end{array}
\end{equation}
where~$\nnn_{q}$ stands for the outward normal vector at the point
$q\in\Sp^2$ (cf.~\cite[Appendix]{Rod14}, recalling that
the outward normal vector at a point~$q=(q_1,\,q_2,\,q_3)\in\Sp^2$ is given by $\nnn_q=q_1\frac{\p}{\p x^1}+q_2\frac{\p}{\p x^2}+q_3\frac{\p}{\p x^3}\sim q$). Notice that $(w^1,\,w^2,\,\Phi^0(w^1,\,w^2)\in\Sp^2$. 

Reasoning, for example, as in~\cite[Appendix]{Rod14}, we can see the Euclidean set~$\BBB$ as the Riemannian manifold~$(\CCC,\,g)$ with the metric tensor
\begin{align*}
g&=\frac{1-(w^2)^2}{\Phi^0(w^1,\,w^2)^2}\ed w^1\otimes\ed w^1 +\frac{w^1w^2}{\Phi^0(w^1,\,w^2)^2}(\ed w^1\otimes\ed w^2+\ed w^2\otimes \ed w^1)\\
&\quad+\frac{1-(w^1)^2}{\Phi^0(w^1,\,w^2)^2}\ed w^2\otimes\ed w^2
 +\ed w^3\otimes\ed w^3
\end{align*}
and the Euclidean volume element in $\BBB$ may then be written as
$
\ed\CCC=\sqrt{\bar g}\,\ed w^1\wedge\ed w^2\wedge\ed w^3$, with $\bar g:=\frac{1}{\Phi^0(w^1,\,w^2)^2}.
$

Moreover the mapping $\Phi^0:\CCC_0\to\BBB_0$ maps the disc $\CCC_0\coloneqq\{(w^1,\,w^2)\in\R^2\mid (w^1)^2+(w^2)^2<\frac{1}{2}\}$ onto
$\BBB_0\coloneqq\BBB\cap\Sp^2$. Hence we can see the subset $\BBB_0$ with the metric inherited from~$\R^3$ as the
Riemannian manifold~$(\CCC_0,\,g_0)$ with the metric tensor
\begin{align*}
g_0&=\frac{1-(w^2)^2}{\Phi^0(w^1,\,w^2)^2}\ed w^1\otimes\ed w^1
+\frac{w^1w^2}{\Phi^0(w^1,\,w^2)^2}(\ed w^1\otimes\ed w^2+\ed w^2\otimes \ed w^1)\\
&\quad+\frac{1-(w^1)^2}{\Phi^0(w^1,\,w^2)^2}\ed w^2\otimes\ed w^2,\label{metric0}
\end{align*}
and volume (i.e., area) element
$\ed\CCC_0=\sqrt{\bar g_0}\,\ed w^1\wedge\ed w^2$ with $\bar g_0:=\frac{1}{\Phi^0(w^1,\,w^2)^2}=\bar g$.

\noindent
{\bf On the curl of a function.} In~\cite[Section~5.7]{Rod-Thesis08} the curl vector field $\curl f$ of a function~$f$ on
the Sphere is the vector field denoted~$\nabla^\perp f$ and defined as $\nabla^\perp f=(\ast\ed f)^\flat$,
where in coordinates~$(w^1,\,w^2)$, we denote by~$[g^{ij}]$ the inverse matrix~$[g_{ij}]^{-1}$ and
$(a_i\ed w^i)^\flat\coloneqq g^{ij}a_j\frac{\p}{\p w^i}$. We obtain
\begin{equation}\label{curl_inRod}
 \nabla^\perp f=\frac{1}{\sqrt{\bar g}}\left(\frac{\p f}{\p w^1}\frac{\p}{\p w^2}-\frac{\p f}{\p w^2}\frac{\p}{\p w^1}\right),
\end{equation}
while in~\cite[Definition~1]{CaoRammahaTiti99} it is denoted~$\mathop{\rm Curl}f$ and can be obtained as follows:
first we extend~$f$ to~$\BBB$;
then we consider the extension~$\tilde f\tilde\nnn$, where~$\tilde\nnn\coloneqq x_1\frac{\p}{\p x^1}+x_2\frac{\p}{\p x^2}
+x_3\frac{\p}{\p x^3}$ is an extension to $\BBB$ of the outward normal vector~$\nnn$ to $\Sp^2\supset\BBB_0$; finally we set
$\mathop{\rm Curl}f\coloneqq(\curl\tilde f\tilde\nnn)\rest{\BBB_0}$, where~$\curl=\nabla\times$ is the standard curl vector in~$\R^3$
(see also~\cite[Definition~1.1]{Ilyin91}, \cite[Definition~2.1]{Ilyin94}); Notice that we can
write~$\tilde f\tilde\nnn\rest{(x^1,\,x^2,\,x^3)}
=\frac{\tilde f}{\norm{\tilde\nnn}{\R^3}}\rest{(x^1,\,x^2,\,x^3)}\nnn\rest{(x^1,\,x^2,\,\Phi^0(x^1,\,x^2))}$. We find
\begin{align*}
 \curl\tilde f\tilde\nnn&=\curl\left(\tilde fx_1\frac{\p}{\p x^1}+\tilde fx_2\frac{\p}{\p x^2}+\tilde fx_3\frac{\p}{\p x^3}\right)\\
 &=\left(x_3\frac{\p\tilde f}{\p x^2}-x_2\frac{\p\tilde f}{\p x^3}\right)\frac{\p}{\p x^1}
 +\left(x_1\frac{\p\tilde f}{\p x^3}-x_3\frac{\p\tilde f}{\p x^1}\right)\frac{\p}{\p x^2}
 +\left(x_2\frac{\p\tilde f}{\p x^1}-x_1\frac{\p\tilde f}{\p x^2}\right)\frac{\p}{\p x^3}.
\end{align*}
and, from $x_3\rest{\BBB_0}=\Phi^0(x^1,\,x^2)$ we have
\begin{align}
 &\quad\mathop{\rm Curl} f\label{curl_inCRT}\\
 &=\left(\Phi^0\frac{\p\tilde f}{\p x^2}-x_2\frac{\p\tilde f}{\p x^3}\right)\frac{\p}{\p x^1}
 +\left(x_1\frac{\p\tilde f}{\p x^3}-\Phi^0\frac{\p\tilde f}{\p x^1}\right)\frac{\p}{\p x^2}
 +\left(x_2\frac{\p\tilde f}{\p x^1}-x_1\frac{\p\tilde f}{\p x^2}\right)\frac{\p}{\p x^3}.\notag
 \end{align}
On the other hand, from~\eqref{curl_inRod}, \eqref{vf-pwi}, the identity~$\sqrt{\bar g}=\frac{1}{\Phi^0}$, and from the fact that the vector
fields~$\frac{\p}{\p w^1}$ and $\frac{\p}{\p w^2}$ are tangent to~$\BBB_0$, we obtain
\begin{align}
 &\quad\nabla^\perp f=\left.\frac{1}{\sqrt{\bar g}}
 \left(\frac{\p\tilde f}{\p w^1}\frac{\p}{\p w^2}-\frac{\p\tilde f}{\p w^2}\frac{\p}{\p w^1}\right)\right|_{\{w^3=0\}}\\
 &=\left(x_2\frac{\p\tilde f}{\p x^3}-\Phi^0\frac{\p\tilde f}{\p x^2}\right)\frac{\p}{\p x^1}
 +\left(\Phi^0\frac{\p\tilde f}{\p x^1}-x_1\frac{\p\tilde f}{\p x^3}\right)\frac{\p}{\p x^2}
 +\left(x_1\frac{\p\tilde f}{\p x^2}-x_2\frac{\p\tilde f}{\p x^1}\right)\frac{\p}{\p x^3}.\notag
 \end{align}
That is, from~\eqref{curl_inCRT}, we have $\nabla^\perp f=-\mathop{\rm Curl} f$.

\noindent
{\bf On the curl of a vector field.} In~\cite[Appendix]{Rod14}, the $\curl$ of a vector field~$u\in T\CCC_0$,
in the manifold~$(\CCC_0,\,g_0)$, is the function defined and denoted as $\nabla^\perp\cdot u\coloneqq\ast\ed u^\sharp$, with
$\sharp=\flat^{-1}$, that is, in local coordinates $(V^i\frac{\p}{\p w_i})^\sharp=g_{ij}V^j\ed w^i$.
In~\cite[Definition~1]{CaoRammahaTiti99}, \cite[Section~2]{Ilyin94}, the $\curl$ of $u\in T\CCC_0$ is the function defined and denoted
as $\mathop{\rm Curl_\nnn} u\coloneqq((\curl\tilde u)\rest{\BBB_0},\,\nnn)_{\R^3}$,
where $\tilde u$ is an extension from~$\BBB_0$ to~$\BBB$ of~$u$. Now we can show that $\nabla^\perp\cdot u=\mathop{\rm Curl_\nnn} u$
up to an additive constant; we proceed as follows: first we notice that
the Laplacian in the two-dimensional manifold~$(\CCC_0,\,g_0)$ is defined by~$\Delta u=(-\ed\ast\ed\ast u^\sharp-\ast\ed\ast\ed u^\sharp)^\flat$
in~\cite[Appendix]{Rod14}, and given by~$(\ed\ast\ed\ast u^\sharp+\ast\ed\ast\ed u^\sharp)^\flat=-\Delta u=(\ed\ast\ed\ast u^\sharp)^\flat-\mathop{\rm Curl}\mathop{\rm Curl_\nnn} u$
in\cite[Section~2]{Ilyin94}. Necessarily, we have that $\ast\ed\ast\ed u^\sharp=-(\mathop{\rm Curl}\mathop{\rm Curl_\nnn} u)^\sharp$ for all~$u\in T\CCC_0$.
Since we already know that $\nabla^\perp f=-\mathop{\rm Curl} f$, it follows that
$\ast\ed(\ast\ed u^\sharp-\mathop{\rm Curl_\nnn} u)=0$, which implies that $\nabla^\perp\cdot u=\ast\ed u^\sharp=\mathop{\rm Curl_\nnn} u$ up to an additive constant.

Further, for a nonharmonic divergence free vector field we have that $\ast\ed u^\sharp=\mathop{\rm Curl_\nnn} u$. Indeed, if $u$ is
divergence free, that is if $-\ast\ed\ast u^\sharp=0$, then $\Delta u^\sharp=(\Delta u)^\sharp=-\ast\ed\ast\ed u^\sharp
=(\mathop{\rm Curl}\mathop{\rm Curl_\nnn} u)^\sharp
=-\ast\ed\mathop{\rm Curl_\nnn} u$. Now, for given constants~$c_1$ and~$c_2$, we have
$\Delta (c_2-c_1)u^\sharp=-\ast\ed(c_2\ast\ed u^\sharp-c_1\mathop{\rm Curl_\nnn} u)$; and if we write
$\ast\ed u^\sharp=z_1+\frac{\int_{\Sp^2}\ast\ed u^\sharp\ed\Sp^2}{\int_{\Sp^2}1\ed\Sp^2 u}$ and
$\mathop{\rm Curl_\nnn} u=z_2+\frac{\int_{\Sp^2}\mathop{\rm Curl_\nnn} u\ed\Sp^2}{\int_{\Sp^2}1\ed\Sp^2 u}$ we have that
$z_j$ is zero averaged, $\int_{\Sp^2}z_j\ed\Sp^2=0$, for $j\in\{1,\,2\}$. Choosing
$c_1=\frac{\int_{\Sp^2}\ast\ed u^\sharp\ed\Sp^2}{\int_{\Sp^2}1\ed\Sp^2 u}$
and $c_2=\frac{\int_{\Sp^2}\mathop{\rm Curl_\nnn} u\ed\Sp^2}{\int_{\Sp^2}1\ed\Sp^2 u}$, we obtain
$(c_2-c_1)\Delta u^\sharp=-\ast\ed(c_2z_1-c_1z_2)$ and $c_2z_1-c_1z_2=c_2\ast\ed u^\sharp-c_1\mathop{\rm Curl_\nnn} u$. Since $c_2z_1-c_1z_2$
is constant and zero averaged, necessarily $c_2z_1-c_1z_2=0$, which implies $c_1=c_2$, because $u$ is nonharmonic.

If $c_1=c_2\ne0$, we have that $\nabla^\perp\cdot u=\ast\ed u^\sharp=z_1+c_1=z_2+c_2=\mathop{\rm Curl_\nnn} u$; if $c_1=c_2=0$ we have that
$\ast\ed u^\sharp-\mathop{\rm Curl_\nnn} u=z_1-z_2$ is constant and zero averaged, so again $\nabla^\perp\cdot u=\ast\ed u^\sharp=\mathop{\rm Curl_\nnn} u$.


Furthermore, notice that by our choice of the space~$H$ in Section~\ref{S:examples}, harmonic vector fields are
orthogonal to the space $H$. Indeed under Lions boundary conditions we have $A=\Delta$ and the eigenfunctions
$\{W_k\mid k\in\N_0\}$ of the Stokes operator $A$ with positive eigenvalues form a basis in~$H$,
and any divergence free harmonic vector field $\WW$ satisfies~$(\WW,\,W_k)_H=(\WW,\,\lambda_k^{-1}\Delta W_k)_H
=\lambda_k^{-1}(\WW,\,\Delta W_k)_H
=\lambda_k^{-1}(\Delta\WW,\,W_k)_H=0$. Therefore we can conclude that
\[
 \nabla^\perp\cdot u=\mathop{\rm Curl_\nnn} u\in H^{-1}(\Sp^2,\,\R)\coloneqq H^1(\Sp^2,\,\R)'\quad\mbox{for all }u\in H
\]
where $(\nabla^\perp\cdot u,\,v)_{H^{-1}(\Sp^2,\,\R),\,H^1(\Sp^2,\,\R)}\coloneqq-(u,\,\nabla^\perp v)_H$. Notice that for
smoother data $u\in H^1(\Sp^2,\,T\Sp^2)$ and $v\in H^1(\Sp^2,\,\R)$, we can write
\begin{align*}
&\quad(\nabla^\perp\cdot u,\,v)_{H^{-1}(\Sp^2,\,\R),\,H^1(\Sp^2,\,\R)}=
(\nabla^\perp\cdot u,\,v)_{L^2(\Sp^2,\,\R)}=\int_{\Sp^2}(\ast\ed u^\sharp)v\,\ed\Sp^2
=\int_{\Sp^2}\ast (v\ed u^\sharp)\,\ed\Sp^2\\
&=\int_{\Sp^2}\ed (v u^\sharp)-\int_{\Sp^2}\ed v\wedge u^\sharp
=0-\int_{\Sp^2}\iota_u(*\ed v)\,\ed\Sp^2
=-\int_{\Sp^2}g(u,\,(*\ed v)^\flat)\,\ed\Sp^2\\
&=-(u,\,\nabla^\perp v)_{L^2(\Sp^2,\,T\Sp^2)}.
\end{align*}






\end{document}